\newtheorem{theorem}{Theorem}
\newtheorem{lemma}[theorem]{Lemma}
\newtheorem{corollary}[theorem]{Corollary}
\numberwithin{equation}{section}
\title{The Period of Ducci Cycles on $\mathbb{Z}_{2^l}$ for Tuples of Length $2^k$}
\author{Mark L. Lewis }
\address{Department of Mathematical Sciences\\
Kent State University\\
Kent, OH 44242}
\email{lewis@math.kent.edu}
\author{Shannon M. Tefft}
\address{Department of Mathematical Sciences\\
Kent State University\\
Kent, OH 44242}
\email{stefft@kent.edu}
\subjclass{20D60, 11B83, 11B50}
\keywords{Ducci sequence, modular arithmetic, length, period, $n$-Number Game}
\date{August 2024}
\begin{document}

\begin{abstract}
    Let the Ducci function $D: \mathbb{Z}_m^n \to \mathbb{Z}_m^n$ be defined as 
    \[D(x_1, x_2, ..., x_n)=(x_1+x_2 \; \text{mod} \; m, x_2+x_3 \; \text{mod} \; m, ..., x_n+x_1 \; \text{mod} \; m)\]
     and let the Ducci sequence of $\mathbf{u}$ be the sequence $\{D^{\alpha}(\mathbf{u})\}_{\alpha=0}^{\infty}$.
    In this paper, we will provide another proof that for $n=2^k$ and $m=2^l$, that all Ducci sequences will end in $(0,0,...,0)$ and additionally prove that this will happen in at most $2^{k-1}(l+1)$ iterations of $D$. 
\end{abstract}
\maketitle

\section{Introduction}\label{introduction}
\indent Consider a function $D: \mathbb{Z}_m^n \to \mathbb{Z}_m^n$, which we define as 
\[D(x_1, x_2, ..., x_n)=(x_1+x_2 \; \text{mod} \; m, x_2+x_3 \; \text{mod} \; m, ..., x_n+x_1 \; \text{mod} \; m).\]
Like \cite{Breuer,Ehrlich,Glaser}, we call $D$ the \textbf{Ducci function} and for a given tuple $\mathbf{u} \in \mathbb{Z}_m^n$, we say that the \textbf{Ducci sequence of} $\mathbf{u}$ is the sequence $\{D^{\alpha}(\mathbf{u})\}_{\alpha=0}^{\infty}$.  

\indent Looking at an example, consider $(3,1,3) \in \mathbb{Z}_4^3$ and the first few terms in its Ducci sequence: $(3,1,3), (0,0,2), (0,2,2),(2,0,2),(2,2,0),(0,2,2)$. One can see that if you continue the Ducci sequence, it cycles through the tuples $(0,2,2), (2,0,2),$ $(2,2,0)$. We say that these three tuples are the Ducci cycle of the Ducci sequence of $(3,1,3)$. More specifically, we say that the \textbf{Ducci cycle} of a tuple $\mathbf{u}$ is the set $\{\mathbf{v} \mid \exists \alpha \in \mathbb{Z}^+ \cup \{0\}, \beta \in \mathbb{Z}^+  \ni \mathbf{v}=D^{\alpha+\beta}(\mathbf{u})=D^{\alpha}(\mathbf{u})\}$, a term \cite{Breuer,Chamberland, Furno} also uses. We define $\mathbf{Len(u)}$ to be the smallest $\alpha$ that can be found to satisfy the equation $\mathbf{v}=D^{\alpha+\beta}(\mathbf{u})=D^{\alpha}(\mathbf{u})$ from our definition for some $\beta \in \mathbb{Z}^+, \mathbf{v} \in \mathbb{Z}_m^n$. We also define $\mathbf{Per(u)}$ to be the smallest $\beta$ that satisfies the same equation. We obtain the notations of $Len(\mathbf{u})$ and $Per(\mathbf{u})$ from Definition 1 of \cite{Breuer}, with \cite{Breuer2, Burmester, Ehrlich} being some other sources that call this the period. Notice that $Per(\mathbf{u})$ is also the number of distinct tuples in the Ducci cycle of $\mathbf{u}$. If a tuple $\mathbf{v} \in \mathbb{Z}_m^n$ is in the Ducci cycle of the Ducci sequence for some tuple $\mathbf{u}$, we may simply say that $\mathbf{v}$ is in a Ducci cycle. Because $\mathbb{Z}_m^n$ is finite, every Ducci sequence enters a cycle.

\indent Define $K(\mathbb{Z}_m^n)$ to be the set of all tuples in $\mathbb{Z}_m^n$ that are in the Ducci cycle for the Ducci sequence of some $\mathbf{u} \in \mathbb{Z}_m^n$,which is given in Definition 4 of \cite{Breuer}. The following theorem about $K(\mathbb{Z}_m^n)$ is stated on page 6001 of \cite{Breuer} but we will provide a proof of it in Section \ref{Findings_General}:
\begin{theorem}\label{cycle_subgroup_thm}
  $K(\mathbb{Z}_m^n)$ is a subgroup of $\mathbb{Z}_m^n$. 
\end{theorem}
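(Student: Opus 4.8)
The plan is to reduce membership in $K(\mathbb{Z}_m^n)$ to a fixed-point condition and then exploit the linearity of $D$. \textbf{Step 1.} I would first show that $\mathbf{v} \in K(\mathbb{Z}_m^n)$ if and only if $D^{\beta}(\mathbf{v}) = \mathbf{v}$ for some $\beta \in \mathbb{Z}^+$. If $\mathbf{v}$ lies in the Ducci cycle of some $\mathbf{u}$, then $\mathbf{v} = D^{\alpha+\beta}(\mathbf{u}) = D^{\alpha}(\mathbf{u})$ for some $\alpha \geq 0$ and $\beta \geq 1$, and applying $D^{\beta}$ to $D^{\alpha}(\mathbf{u})$ yields $D^{\beta}(\mathbf{v}) = D^{\alpha+\beta}(\mathbf{u}) = \mathbf{v}$. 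Conversely, if $D^{\beta}(\mathbf{v}) = \mathbf{v}$, then taking $\mathbf{u} = \mathbf{v}$ and $\alpha = 0$ exhibits $\mathbf{v}$ as a member of its own Ducci cycle. (Finiteness of $\mathbb{Z}_m^n$ is what guarantees the existence of such a $\beta$ for elements of cycles, but for this equivalence it is already built into the definition.)

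\textbf{Step 2.} I would record that $D$ is an endomorphism of the abelian group $\mathbb{Z}_m^n$: each coordinate of $D(\mathbf{x})$ is an integer-linear combination of the coordinates of $\mathbf{x}$ reduced mod $m$, so $D(\mathbf{x}+\mathbf{y}) = D(\mathbf{x}) + D(\mathbf{y})$, and hence $D^{\beta}$ is a group endomorphism for every $\beta \in \mathbb{Z}^+$. \textbf{Step 3.} For closure, note first that $(0,0,\ldots,0) \in K(\mathbb{Z}_m^n)$ since it is fixed by $D$. Given $\mathbf{v}, \mathbf{w} \in K(\mathbb{Z}_m^n)$, use Step 1 to pick $\beta_1, \beta_2 \in \mathbb{Z}^+$ with $D^{\beta_1}(\mathbf{v}) = \mathbf{v}$ and $D^{\beta_2}(\mathbf{w}) = \mathbf{w}$; then $\beta := \beta_1\beta_2$ satisfies $D^{\beta}(\mathbf{v}) = \mathbf{v}$ and $D^{\beta}(\mathbf{w}) = \mathbf{w}$, so by Step 2, $D^{\beta}(\mathbf{v} - \mathbf{w}) = D^{\beta}(\mathbf{v}) - D^{\beta}(\mathbf{w}) = \mathbf{v} - \mathbf{w}$, and Step 1 gives $\mathbf{v} - \mathbf{w} \in K(\mathbb{Z}_m^n)$. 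The subgroup criterion is then satisfied.

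I do not anticipate a genuine obstacle; the only point needing care is the equivalence in Step 1, namely confirming that ``lying in a Ducci cycle'' coincides with ``being a periodic point of $D$.'' A more structural alternative, which I would mention, is to observe that the descending chain $\mathbb{Z}_m^n \supseteq D(\mathbb{Z}_m^n) \supseteq D^2(\mathbb{Z}_m^n) \supseteq \cdots$ stabilizes at some $D^s(\mathbb{Z}_m^n)$, that $D$ restricts to a bijection on this stable image (a surjective self-map of a finite set), and that $K(\mathbb{Z}_m^n)$ is exactly this image; as the image of the endomorphism $D^s$ it is automatically a subgroup. Either route uses essentially the same ingredients — the linearity of $D$ together with finiteness of $\mathbb{Z}_m^n$ — and I would present whichever reads more cleanly once the characterization in Step 1 is in hand.
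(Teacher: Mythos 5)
Your proposal is correct and follows essentially the same route as the paper: both reduce membership in $K(\mathbb{Z}_m^n)$ to being fixed by some power of $D$ and then use that $D^{\beta}$ is an endomorphism to get closure. The only differences are cosmetic — the paper takes the common exponent $d = P_m(n)$ (leaning on the cited fact that every period divides $P_m(n)$) and checks closure under addition, whereas you use $\beta_1\beta_2$ and the difference criterion, which makes your version slightly more self-contained.
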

\indent If for a tuple $\mathbf{u}$ there exists a tuple $\mathbf{v}$ such that $D(\mathbf{v})=\mathbf{u}$, we say that $\mathbf{v}$ is a \textbf{predecessor} of $\mathbf{u}$. We believe page 259 of \cite{Furno} is the first time this term is used, then later by \cite{Breuer, Glaser}.

\indent It is not always the case that the Ducci sequence ends in a cycle containing more than one tuple. Notice that the Ducci cycle of $(0,0,...,0)$ consists only of itself and that it satisfies $Per(\mathbf{u})=1$. In addition to this, $(0,0,...,0)$ is the only tuple that satisfies $D(\mathbf{u})=\mathbf{u}$, which we believe is first noted in Remark 5 of \cite{Burmester} and then again by \cite{Breuer}. This means that only $(0,0,...,0)$ and tuples whose Ducci cycle are $\{(0,0,..,0)\}$ have a period of $1$. We say that a Ducci sequence \textbf{vanishes} when its cycle is $\{(0,0,...,0)\}$, a term first used by \cite{Burmester} on page 117 and also by \cite{Breuer, Glaser}. 

\indent An important Ducci sequence is the one corresponding to $(0,0,...,0,1) \in \mathbb{Z}_m^n$. We call the Ducci sequence of this tuple the \textbf{basic Ducci sequence} of $\mathbb{Z}_m^n$, like \cite{Ehrlich} first uses on page 302, as well as \cite{Breuer,Dular, Glaser}. In a similar way to particularly \cite{Breuer} in Definition 5 and to \cite{Ehrlich, Glaser}, we denote $L_m(n)=Len(0,0,...,0,1)$ and $P_m(n)=Per(0,0,...,0,1)$. These are significant because Lemma 1 in \cite{Breuer} tells us that for every $\mathbf{u} \in \mathbb{Z}_m^n$, $L_m(n) \geq Len(\mathbf{u})$ and $Per(\mathbf{u})|P_m(n)$. Page 858 of \cite{Breuer2} also uses the notation of $P_m(n)$ to mean the maximal period for a Ducci sequence on $\mathbb{Z}_m^n$.


\indent In this paper, we focus on the case where $m=2^l$ and $n=2^k$ for integers $k,l \geq 1$. It is first proven in (I) on page 103 of \cite{Wong} that all Ducci sequences in $\mathbb{Z}_{2^l}^{2^k}$ vanish. Theorem 2.3 of \cite{Dular} also provides a proof of this. However, we would like to establish a value for $L_m(n)$ in this case. Our main goal is to prove the following theorem:
\begin{theorem}\label{MainThm}
    Let $n=2^k$, $m=2^l$ for integers $l,k \geq 1$. Then $L_m(n)=(l+1)2^{k-1}$ and $P_m(n)=1$.
\end{theorem}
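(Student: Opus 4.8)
The statement has two parts: $P_{2^l}(2^k) = 1$ (i.e., the basic Ducci sequence vanishes) and $L_{2^l}(2^k) = (l+1)2^{k-1}$. The first part is already known (cited from Wong and Dular), so I should restate it quickly or reprove it compactly; the real work is pinning down the exact length. I think of the Ducci function on $\mathbb{Z}_{2^l}^{2^k}$ algebraically: identifying $\mathbb{Z}_{2^l}^n$ with $R = \mathbb{Z}_{2^l}[x]/(x^n-1)$, the map $D$ is multiplication by $(1+x)$. Then $D^\alpha(\mathbf{u})$ corresponds to $(1+x)^\alpha \mathbf{u}$, and the basic Ducci sequence corresponds to the powers $(1+x)^\alpha$ themselves. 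So $L_{2^l}(2^k)$ is the least $\alpha$ such that $(1+x)^\alpha = (1+x)^{\alpha+\beta}$ in $R$ for some $\beta \geq 1$; since the sequence vanishes, this is just the least $\alpha$ with $(1+x)^\alpha \equiv 0$ in $R$, i.e., the nilpotency index of $(1+x)$ in $R$.

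**Key computation.** The plan is to compute the nilpotency index of $1+x$ in $\mathbb{Z}_{2^l}[x]/(x^{2^k}-1)$. Over $\mathbb{Z}_2$, we have $x^{2^k}-1 = (x-1)^{2^k} = (1+x)^{2^k}$, so $(1+x)^{2^k} \equiv 0$ already mod $2$; lifting to $\mathbb{Z}_{2^l}$ one expects to need roughly $l$ times as many factors, and the factor $2^{k-1}$ rather than $2^k$ reflects that $(1+x)^{2^k} = x^{2^k}-1 + 2(\text{stuff})$ already provides one "power of 2 for free." Concretely I would show $(1+x)^{2^k} \equiv 2 g(x) \pmod{x^{2^k}-1}$ for an explicit $g$, and more generally track the 2-adic valuation: prove by induction that $(1+x)^{j \cdot 2^k} \equiv 2^j g(x)^j \cdot(\text{unit or controlled term})$, so that $(1+x)^{l \cdot 2^k} \equiv 0$, giving $L \leq l \cdot 2^k$ — but this is too weak. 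To get the sharp bound $(l+1)2^{k-1}$ I would instead write $\alpha = a \cdot 2^{k-1}$ and analyze $(1+x)^{2^{k-1}}$ directly: over $\mathbb{Z}$, $(1+x)^{2^{k-1}} = \sum \binom{2^{k-1}}{i} x^i$, and modulo $x^{2^k}-1$ one needs to understand the 2-adic valuations of the coefficients of its powers. The cleanest route may be to pass to $\mathbb{Z}_{2^l}[x]/((x+1)^{2^k})$ via the substitution $y = x+1$ is not literally valid (the ideal $(x^{2^k}-1)$ is not $(y^{2^k})$ over $\mathbb{Z}_{2^l}$, only mod 2), so instead I'd set $x^{2^k}-1 = y^{2^k} - 2h(y)$ after $y=x+1$ and carefully do the division algorithm to reduce powers of $y$ modulo this relation, tracking valuations.

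**Lower bound / exactness.** For the lower bound I must exhibit that $(1+x)^{(l+1)2^{k-1}-1} \not\equiv 0$ in $R$. This amounts to finding one coefficient of $(1+x)^{(l+1)2^{k-1}-1} \bmod (x^{2^k}-1)$ whose 2-adic valuation is strictly less than $l$. A natural candidate is the constant coefficient, or the sum $\sum_{i \equiv 0} \binom{\alpha}{i}$ type expressions: reducing mod $x^{2^k}-1$ collapses the binomial coefficients into sums over residue classes mod $2^k$, and these sums of binomials have predictable 2-adic valuations (via Kummer's theorem on $v_2\binom{a}{b}$ = number of carries). The main obstacle I anticipate is exactly this: carefully controlling the 2-adic valuation of the "folded" binomial sums $\sum_{i \equiv r \pmod{2^k}} \binom{\alpha}{i}$ and showing the minimum over $r$ drops below $l$ precisely at $\alpha = (l+1)2^{k-1}-1$ and reaches $\geq l$ at $\alpha = (l+1)2^{k-1}$. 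I would handle this by an induction on $l$ (and perhaps on $k$), peeling off one factor of $2^{k-1}$ at a time and showing each such block multiplies everything by an element of 2-adic valuation exactly $1$ modulo the relation — with the base case $l=1$ checked directly from $(1+x)^{2^k} \equiv 2(1 + x + \cdots)$-type identities. Once the length is established, $P_{2^l}(2^k)=1$ follows since a vanishing sequence has period $1$ by definition.
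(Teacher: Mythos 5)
Your algebraic framing is sound and is essentially the same computation the paper performs in different notation: identifying $\mathbb{Z}_{2^l}^{2^k}$ with $\mathbb{Z}_{2^l}[x]/(x^{2^k}-1)$ and $D$ with multiplication by $1+x$ is correct, the length is indeed the nilpotency index of $1+x$ once vanishing is known, and your ``folded'' binomial sums $\sum_{i\equiv s \pmod{2^k}}\binom{r}{i}$ are exactly the coefficients $a_{r,s}$ of the paper (your convolution-of-blocks idea is Theorem \ref{BigCoeffThm}(3)). The problem is that the proposal stops where the proof has to begin. The two statements that carry the theorem --- (i) every folded sum at exponent $(l+1)2^{k-1}$ is $\equiv 0 \pmod{2^l}$, and (ii) some folded sum at exponent $(l+1)2^{k-1}-1$ is not --- are only announced as ``the main obstacle'' and deferred to an unspecified induction, with no base cases, no inductive statement, and no computation of any of the relevant $2$-adic valuations.

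Moreover, the heuristic you propose for that induction, that each further block of $2^{k-1}$ factors of $1+x$ ``multiplies everything by an element of 2-adic valuation exactly 1 modulo the relation,'' is not true coefficientwise and cannot serve as the inductive hypothesis: at exponent $l2^{k-1}$ the individual folded coefficients are in general only divisible by $2^{l-1}$, and the extra factor of $2$ is gained only by the paired combinations $a_{l2^{k-1},s}+a_{l2^{k-1},s-2^{k-1}}$ (the content of Lemma \ref{2coeffsumlemma1}), which interact with the next half-block through $(1+x)^{2^{k-1}}\equiv 1+x^{2^{k-1}} \pmod 2$. The lower bound is still more delicate: a valuation inequality is not enough, since one must pin down an exact residue of one specific coefficient one step before the vanishing exponent; the paper does this by proving the middle folded coefficient at exponent $l2^{k-1}$ is $\equiv 2^{l-1}\pmod{2^l}$, which in turn rests on sharp congruences such as $\binom{2^j}{2^{j-1}}\equiv 6\pmod 8$ (Lemma \ref{middlebinomcoeffpt2}), $\binom{2^j-1}{2^{j-1}}\equiv 3\pmod 4$ (Lemma \ref{binomcoeff_3mod4}), and a second folded-sum congruence (Lemma \ref{2coeffsumlemma2}), plus separate handling of $k=1$ and the small values of $l$. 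None of this is supplied or replaced by an alternative argument, so as it stands your text is a correct reformulation together with a statement of what must be proved, not a proof of the theorem.
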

As part of his proof, \cite{Wong} finds that $D^{l*2^k}(\mathbf{u})=(0,0,...,0)$ for every tuple $\mathbf{u} \in \mathbb{Z}_{2^l}^{2^k}$, however, neither \cite{Wong} nor \cite{Dular} find a value for $L_{2^l}(2^k)$. 

\indent It is worth noting that as a result of  all Ducci sequences vanishing, we gain the following corollary:
\begin{corollary}
$K(\mathbb{Z}_{2^l}^{2^k})$ is the trivial subgroup.
\end{corollary}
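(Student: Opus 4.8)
The plan is to derive this immediately from the fact, recalled (and reproved) in this paper, that every Ducci sequence in $\mathbb{Z}_{2^l}^{2^k}$ vanishes, together with Theorem~\ref{cycle_subgroup_thm}. Recall that by definition $K(\mathbb{Z}_{2^l}^{2^k})$ is the set of all tuples that lie in the Ducci cycle of the Ducci sequence of some $\mathbf{u} \in \mathbb{Z}_{2^l}^{2^k}$, i.e. the union, taken over all $\mathbf{u}$, of the Ducci cycles of the tuples $\mathbf{u}$.

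First I would observe that, since every Ducci sequence in $\mathbb{Z}_{2^l}^{2^k}$ vanishes, the Ducci cycle of each $\mathbf{u}$ is exactly $\{(0,0,\dots,0)\}$. Hence the union defining $K(\mathbb{Z}_{2^l}^{2^k})$ collapses to the single element $(0,0,\dots,0)$, so $K(\mathbb{Z}_{2^l}^{2^k}) = \{(0,0,\dots,0)\}$. By Theorem~\ref{cycle_subgroup_thm} this set is a subgroup of $\mathbb{Z}_{2^l}^{2^k}$, and being a singleton it is the trivial subgroup.

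Alternatively, I could phrase the argument through periods: Theorem~\ref{MainThm} gives $P_{2^l}(2^k) = 1$, so by Lemma~1 of \cite{Breuer} every $\mathbf{u} \in \mathbb{Z}_{2^l}^{2^k}$ satisfies $Per(\mathbf{u}) \mid 1$, forcing $Per(\mathbf{u}) = 1$. Any $\mathbf{v} \in K(\mathbb{Z}_{2^l}^{2^k})$ then lies in a Ducci cycle consisting of a single tuple $\mathbf{w}$ with $D(\mathbf{w}) = \mathbf{w}$; since $(0,0,\dots,0)$ is the only tuple fixed by $D$, we get $\mathbf{v} = \mathbf{w} = (0,0,\dots,0)$. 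Either way, there is no genuine obstacle here: all the substance is contained in Theorem~\ref{MainThm} (equivalently, in the vanishing of every Ducci sequence on $\mathbb{Z}_{2^l}^{2^k}$), and the corollary follows by unwinding the definition of $K(\mathbb{Z}_{2^l}^{2^k})$.
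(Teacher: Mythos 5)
Your argument is correct and matches the paper's: the corollary is stated there as an immediate consequence of the fact that every Ducci sequence on $\mathbb{Z}_{2^l}^{2^k}$ vanishes, so every Ducci cycle is $\{(0,0,\dots,0)\}$ and $K(\mathbb{Z}_{2^l}^{2^k})$ collapses to the trivial subgroup. Your alternative phrasing via $P_{2^l}(2^k)=1$ and the uniqueness of the fixed point of $D$ is a harmless restatement of the same fact.
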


\indent The work in this paper was done while the second author was a Ph.D. student at Kent State University under the advisement of the first author and will appear as part of the second author's dissertation.
\section{Background}\label{Background}
\indent There are many different versions of the Ducci function. The first and most common Ducci function is $\bar{D}: (\mathbb{Z}^+ \cup \{0\})^n \to (\mathbb{Z}^+ \cup \{0\})^{n}$ being defined as $\bar{D}(x_1, x_2, ..., x_n)=(|x_1-x_2|, |x_2-x_3|, ..., |x_n-x_1|)$, with \cite{Ehrlich, Freedman,Glaser, Furno} being a few examples, or similarly defined on $\mathbb{Z}^n$ like in \cite{Breuer}. Other sources, such as \cite{Brown,Chamberland, Schinzel}, define it on $\mathbb{R}^n$. Note that if $\mathbf{u} \in \mathbb{Z}^n$, then $\bar{D}(\mathbf{u}) \in (\mathbb{Z}^+ \cup \{0\})^n$. Therefore, for simplicity, we will refer to the Ducci case on $(\mathbb{Z}^+ \cup \{0\})^n$ or $\mathbb{Z}^n$ as the Ducci case on $\mathbb{Z}^n$.


\indent For both Ducci on $\mathbb{Z}^n$ and Ducci on $\mathbb{R}^n$, it has been proven that all Ducci sequences enter a cycle. Some examples of sources that explain why this happens on $\mathbb{Z}^n$ are \cite{Burmester, Ehrlich, Glaser, Furno} and \cite{Schinzel} proves it for $\mathbb{R}^n$ in Theorem 2. For Ducci on $\mathbb{Z}^n$, when looking at the tuples in a Ducci cycle, all of the entries belong to $\{0,c\}$ for some $c \in \mathbb{Z}^+$, which is proved in Lemma 3 of \cite{Furno} and then again later by \cite{Pompili}. Because $\bar{D}(\lambda \mathbf{u})=\lambda \bar{D}(\mathbf{u})$ when $\lambda \in \mathbb{Z}$ and $\mathbf{u} \in \mathbb{Z}^n$, this means that for this case, one only needs to worry about when Ducci is defined on $\mathbb{Z}_2$ when examining Ducci cycles and periods, something that \cite{Breuer,Ehrlich,Glaser,Furno} all talk about. For the $\mathbb{R}^n$ case, \cite{Schinzel} proves a similar result in Theorem 1: if the Ducci sequence reaches a limit point, then all of the entries in the tuple belong to $\{0,c\}$ for some $c \in \mathbb{R}$.

\indent  It has been proven that the Ducci sequence will eventually vanish for all tuples in $\mathbb{Z}^n$ if and only if if $n$ is a power of $2$. According to \cite{Brown1, Chamberland}, \cite{Ciamberlini} is the first to provide a proof of this with other papers, like \cite{Glaser, Furno}, also citing \cite{Ciamberlini} for a proof of this fact. \cite{Ciamberlini} is also the first paper published that talks about Ducci sequences. Unfortunately, we are unable to find a copy of \cite{Ciamberlini}; a review of the paper can be found at \cite{CiamberliniReview}. Many different proofs for the fact that all Ducci sequences vanish have been made since \cite{Ciamberlini}, with \cite{Ehrlich, Freedman, Miller, Pompili} being a few.
As discussed in the introduction, this conclusion can be extended to $m=2^l$ where $l \geq 1$. Our ultimate goal is to establish how long we can expect it to take for a sequence to reach $(0,0,...,0)$.

\indent For the Ducci case on $\mathbb{Z}_2^n$, there have been a few sources that look at the maximum value of the length of a Ducci sequence, $L_2(n)$, for a variety of values for $n$. The first source is \cite{Ehrlich} on page 303 who found that for odd $n$, $L_2(n)=1$. The next example is the specific case where $n=2^{k_1}+2^{k_2}$ for $k_1>k_2 \geq 0$ by \cite{Glaser} in Theorem 6, who finds $L_2(n)=2^{k_2}$. 
Theorem 4 of \cite{Breuernote} then extends this to all even $n$, finding
that if $n=2^kn_1$ where $n_1$ is odd, then $L_2(n)=2^k$. This supports the case where $l=1$ for Theorem \ref{MainThm}. 

\indent Returning to our Ducci case, \cite{Wong} is the first paper to look at Ducci sequences on $\mathbb{Z}_m^n$, followed by \cite{Breuer, Breuer2, Dular}. 

\indent We start by discussing more about our example of the Ducci cycle of $(3,1,3)$ to provide a better visualization of what Ducci sequences in $\mathbb{Z}_m^n$ look like. We can do this by creating a transition graph that maps all of the Ducci sequences of $\mathbb{Z}_4^3$ and direct our focus to the connected component containing $(3,1,3)$, which can be found in Figure \ref{n_odd_figure}.

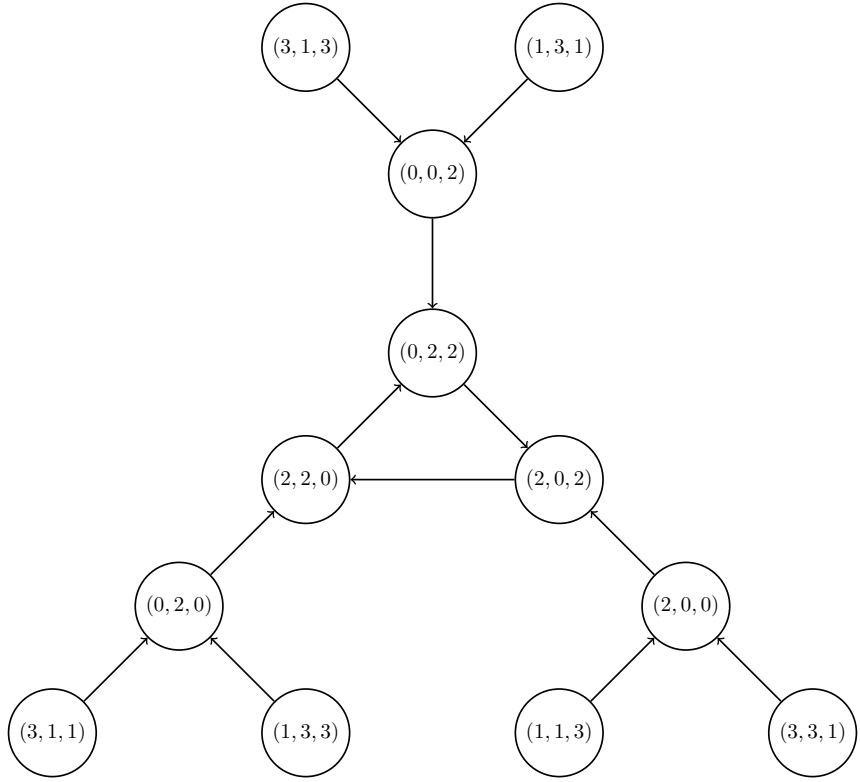
\begin{figure}
\centering
\resizebox{.9\textwidth}{!}{
\begin{tikzpicture}[node distance={30mm}, thick, main/.style = {draw, circle}]
    \node[main](1){$(0,2,2)$};
    \node[main](2)[above of=1]{$(0,0,2)$};
    \node[main](3)[above left of=2]{$(3,1,3)$};
    \node[main](4)[above right of=2]{$(1,3,1)$};
    \node[main](5)[below left of= 1]{$(2,2,0)$};
     \node[main](9)[below left of=5]{$(0,2,0)$};
    \node[main](6)[below left of=9]{$(3,1,1)$};
    \node[main](7)[below right of=9]{$(1,3,3)$};
    \node[main](8)[below right of=1]{$(2,0,2)$};
    \node[main](10)[below right of=8]{$(2,0,0)$};
    \node[main](11)[below left of=10]{$(1,1,3)$};
    \node[main](12)[below right of=10]{$(3,3,1)$};

    \draw[->](2)--(1);
    \draw[->](3)--(2);
    \draw[->](4)--(2);
    \draw[->](5)--(1);
    \draw[->](9)--(5);
    \draw[->](6)--(9);
    \draw[->](7)--(9);
    \draw[->](8)--(5);
    \draw[->](1)--(8);
    \draw[->](10)--(8);
    \draw[->](11)--(10);
    \draw[->](12)--(10);
\end{tikzpicture}
}
\caption{Transition Graph for $\mathbb{Z}_4^3$}\label{n_odd_figure}
\end{figure}

Using our definitions from before, one can see  $Len(3,1,3)=2$, $Len(0,0,2)=1$, and $Len(0,2,2)=0$. Although $(1,3,1)$ is not in the Ducci sequence of $(3,1,3)$, we can see that the Ducci sequences of $(3,1,3)$ and $(1,3,1)$ have the same Ducci cycle, as well as every other tuple, in this connected component. In addition to this, all of these tuples have period $3$.

\indent Notice that $(0,0,2)$ has two predecessors, $(3,1,3)$ and $(1,3,1)$. But not all tuples have predecessors; both $(3,1,3)$ and $(1,3,1)$ are examples of this. 

\indent We also present the transition graph for $\mathbb{Z}_4^2$ to give an example where all Ducci sequences vanish in Figure \ref{n_even_figure}.
\begin{figure}
\centering
    \resizebox{.5\textwidth}{!}{
\begin{tikzpicture}[node distance={20mm}, thick, main/.style = {draw, circle}]
\node[main](1){$(0,0)$};
\node[main](2)[above left of =1]{$(1,3)$};
\node[main](3)[below left of=1]{$(3,1)$};
\node(17)[above of=1]{};
\node[main](4)[above of=17]{$(2,2)$};
\node[main](5)[above left of=4]{$(0,2)$};
\node[main](6)[below left of =4]{$(2,0$};
\node[main](7)[right of=4]{$(3,3)$};
\node[main](8)[above of=7]{$(0,3)$};
\node[main](9)[above right of=7]{$(3,0)$};
\node[main](10)[below right of=7]{$(1,2)$};
\node[main](11)[below of=7]{$(2,1)$};
\node(18)[above of=4]{};
\node[main](12)[above of=18]{$(1,1)$};
\node[main](13)[left of=12]{$(0,1)$};
\node[main](14)[above left of=12]{$(1,0)$};
\node[main](15)[above right of=12]{$(2,3)$};
\node[main](16)[right of=12 ]{$(3,2)$};

\draw[->] (1) to [out=0,in=270,looseness=5] (1);
\draw[->](2)--(1);
\draw[->](3)--(1);
\draw[->](4)--(1);
\draw[->](5)--(4);
\draw[->](6)--(4);
\draw[->](7)--(4);
\draw[->](8)--(7);
\draw[->](9)--(7);
\draw[->](10)--(7);
\draw[->](11)--(7);
\draw[->](12)--(4);
\draw[->](13)--(12);
\draw[->](14)--(12);
\draw[->](15)--(12);
\draw[->](16)--(12);
    
\end{tikzpicture}
}
\caption{Transition graph for $\mathbb{Z}_4^2$}\label{n_even_figure}
\end{figure}
For this graph, all tuples in $\mathbb{Z}_4^2$ are part of the same component. Here, we can see that $L_2(4)=3$, but not all tuples without predecessors have this length. For example, $(2,3)$ and $(1,3)$ both do not have predecessors, but $Len(2,3)=3$ and $Len(1,3)=1$.\\
\indent Notice that all tuples in $\mathbb{Z}_4^2$ that have a predecessor have exactly four predecessors. This is not a coincidence, in fact,
\begin{theorem}\label{n_odd_m_even_typesofpreds}
Let $n$ be even. If a tuple $\mathbf{u}=(x_1, x_2, ..., x_n) \in \mathbb{Z}_m^n$ has a predecessor $(y_1, y_2, ..., y_n)$, then $(y_1+z,y_2-z,...,y_{n-1}+z,y_n-z)$ is also a predecessor of $\mathbf{u}$ where $z \in \mathbb{Z}_m$. Moreover, if a tuple $\mathbf{u}$ has a predecessor, it has exactly $m$ predecessors.
\end{theorem}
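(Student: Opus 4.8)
The plan is to break the statement into three pieces: (i) verify directly that the displayed tuple is always a predecessor of $\mathbf{u}$, (ii) check that the $m$ tuples obtained this way (one for each $z \in \mathbb{Z}_m$) are pairwise distinct, and (iii) show that $\mathbf{u}$ has no other predecessors, so that the count is exactly $m$.

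For (i), write $\mathbf{y}=(y_1,\dots,y_n)$ for the given predecessor, so $y_i+y_{i+1}=x_i$ for $1\le i\le n-1$ and $y_n+y_1=x_n$, and set $\mathbf{y}'=(y_1+z,\,y_2-z,\,\dots,\,y_{n-1}+z,\,y_n-z)$. Computing $D(\mathbf{y}')$ coordinate by coordinate, in each of the first $n-1$ coordinates the $+z$ from one entry and the $-z$ from the next cancel, returning $x_i$; the one coordinate requiring care is the last, namely $y_n'+y_1'=(y_n-z)+(y_1+z)=y_n+y_1=x_n$. This final cancellation is exactly where the hypothesis that $n$ is even is used: it forces the alternating sign pattern to end in $-z$ at position $n$, matching the $+z$ at position $1$. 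Hence $D(\mathbf{y}')=\mathbf{u}$.

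For (ii), observe that if $z\ne z'$ in $\mathbb{Z}_m$ then the corresponding tuples already differ in their first coordinate, $y_1+z\ne y_1+z'$, so $z\mapsto\mathbf{y}'$ is injective and $\mathbf{u}$ has at least $m$ predecessors. For (iii), use that $D$ is a homomorphism of the group $\mathbb{Z}_m^n$: if $\mathbf{y}$ and $\mathbf{w}$ are both predecessors of $\mathbf{u}$, then $\mathbf{y}-\mathbf{w}\in\ker D$, so the set of all predecessors of $\mathbf{u}$ is the coset $\mathbf{y}+\ker D$, and it suffices to show $|\ker D|=m$. Solving $D(\mathbf{v})=\mathbf{0}$ gives $v_{i+1}=-v_i$ for all $i$ (indices cyclic), hence $v_i=(-1)^{i-1}v_1$; the wrap-around relation $v_1=-v_n$ then reads $v_1=(-1)^n v_1$, which holds automatically precisely because $n$ is even. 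Thus $\ker D=\{(z,-z,z,-z,\dots,z,-z):z\in\mathbb{Z}_m\}$ has exactly $m$ elements, and since $\mathbf{y}'=\mathbf{y}+(z,-z,\dots,z,-z)$ this also confirms that the explicit family in the statement is the whole coset.

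The only real subtlety is the cyclic wrap-around, and it is also the single place where the parity of $n$ is needed in each half of the argument; everything else is a routine linear computation, so I expect no obstacle beyond careful bookkeeping of the alternating signs.
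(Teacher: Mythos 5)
Your proposal is correct. Parts (i) and (ii) match the paper's argument (the paper simply asserts the verification you carry out, and likewise distinguishes the $m$ tuples by their first entries), but your part (iii) takes a different route for the "exactly $m$" count: you use that $D$ is a homomorphism, so the predecessor set of $\mathbf{u}$ is the coset $\mathbf{y}+\ker D$, and you compute $\ker D=\{(z,-z,\dots,z,-z):z\in\mathbb{Z}_m\}$ directly. The paper instead argues coordinate-by-coordinate that two predecessors sharing the same first entry must coincide, so there is at most one predecessor per choice of first entry and hence at most $m$ in total. The underlying computation is the same cancellation (your kernel recursion $v_{i+1}=-v_i$ is exactly the paper's propagation $y_i=y_i'$ applied to the difference of two predecessors), but the framings buy different things: the paper's count-by-first-entry is more elementary and does not invoke the group structure (which the paper only develops after this theorem), while your coset formulation packages the result more structurally, shows at once that the explicit family in the statement exhausts all predecessors, and isolates precisely where the parity of $n$ enters, namely the wrap-around consistency $v_1=(-1)^n v_1$ (for odd $n$ this would instead force $2v_1=0$, so the count would be $\gcd(2,m)$ rather than $m$). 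Either argument is complete; no gaps.
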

\begin{proof}
Let $x_i,y_i \in \mathbb{Z}_m$. Clearly, if $(y_1, y_2, ..., y_n)$ is a predecessor to $(x_1,x_2,...,x_n)$, then so is $(y_1+z, y_2-z, ..., y_n-z)$ if $z \in \mathbb{Z}_m$. This gives us at least m predecessors to $(x_1,x_2,...,x_n)$, each beginning with a different element in $\mathbb{Z}_m$.

\indent We now confirm that these are all the predecessors of $(x_1,x_2,...,x_n)$. If we consider two tuples that are predecessors to $(x_1,x_2,...,x_n)$ that both begin with the same element, say $(y_1,y_2,...,y_n)$ and $(y_1,y_2',...,y_n')$, then we have the relations 
\[y_1+y_2 \equiv x_1 \; \text{mod} \; m\]
and
\[y_1+y_2' \equiv x_1 \; \text{mod} \; m,\]
which gives us $y_2=y_2'$. We can repeat this to see $y_i=y_i'$ for every $i, 1 \leq i \leq n$ and to see that $(x_1,x_2,...,x_n)$ only has one unique predecessor beginning with $y_1$, so it has exactly $m$ predecessors.
 \end{proof}
 \indent Let's now draw our attention back to $D$ specifically. Because 
\[D((x_1,x_2, ..., x_n)+(x_1', x_2', ..., x_n'))=D(x_1+x_1', x_2+x_2', ..., x_n+x_n')\]
\[=(x_1+x_2+x_1'+x_2', x_2+x_3+x_2'+x_3',..., x_n+x_1+x_n'+x_1')\]
\[=(x_1+x_2, x_2+x_3, ..., x_n+x_1)+(x_1'+x_2', x_2'+x_3', ..., x_n'+x_1')\]
\[=D(x_1, x_2, ..., x_n)+D(x_1', x_2', ..., x_n'),\]
it is true that $D \in End(\mathbb{Z}_m^n)$. Notice also that it remains true that $D(\lambda \mathbf{u})=\lambda D(\mathbf{u})$ when $\lambda \in \mathbb{Z}_m$ and $\mathbf{u} \in \mathbb{Z}_m^n$. Since this means $D^{\alpha}(\lambda \mathbf{u})=\lambda D^{\alpha}(\mathbf{u})$ for $\alpha \in \mathbb{Z}^+$, the Ducci sequence for $\lambda\mathbf{u}$ is $\{\lambda D^{\alpha}(\mathbf{u})\}_{\alpha=0}^{\infty}$ and if $\mathbf{u} \in K(\mathbb{Z}_m^n)$, then so is $\lambda \mathbf{u}$.\\
\indent Define $H: \mathbb{Z}_m^n \to \mathbb{Z}_m^n$ as
\[H(x_1, x_2, ..., x_n)=(x_2, x_3, ..., x_n, x_1).\]
We also have that $H \in End(\mathbb{Z}_m^n)$  and $D=I+H$ where $I$ is the identity endomorphism. Note that
\[H(D(x_1, x_2, .., x_n))=H(x_1+x_2, x_2+x_3, ..., x_n+x_1)\]
\[=(x_2+x_3, x_3+x_4, ..., x_n+x_1, x_2+x_1)\]
\[=D(x_2, x_3, .., x_n,x_1)\]
\[=D(H(x_1, x_2, ..., x_n)),\]
and therefore $H,D$ commute. This means that if 
$\{D^{\alpha}(\mathbf{u})\}_{\alpha=0}^{\infty}$ is the Ducci sequence for $\mathbf{u} \in \mathbb{Z}_m^n$, then $\{H^{\beta}(D^{\alpha}(\mathbf{u}))\}_{\alpha=0}^{\infty}$ is the Ducci sequence for $H^{\beta}(\mathbf{u})$ when $0 \leq \beta \leq n-1$. Consequently, if $\mathbf{u} \in K(\mathbb{Z}_m^n)$, then so is $H^{\beta}(\mathbf{u})$.\\
\section{Findings on Ducci for $n,m$ Arbitrary}\label{Findings_General}
\indent We can now prove Theorem \ref{cycle_subgroup_thm} from Section \ref{introduction}. Note that this theorem is stated as a fact in \cite{Breuer}, but a proof is provided here for completeness.

\begin{proof}[Proof of Theorem \ref{cycle_subgroup_thm}]
 \indent We start by showing that 
 \[D^{\alpha}((x_1, x_2, ..., x_n)+(x_1', x_2', ..., x_n'))=D^{\alpha}(x_1, x_2, ..., x_n)+D^{\alpha}(x_1', x_2', ..., x_n')\]
  by induction for ${\alpha} \in \mathbb{Z}^+$. This is true for ${\alpha}=1$ because $D$ is an endomorphism.

 Assume that it is true for $\alpha-1$, then 
    \[D^{\alpha}((x_1, x_2, ..., x_n)+(x_1', x_2', ..., x_n'))
    =D(D^{\alpha-1}((x_1, x_2, ..., x_n)+(x_1', x_2', ..., x_n')).\]
   By induction, we have 
    \[D(D^{\alpha-1}(x_1, x_2, ..., x_n)+D^{\alpha-1}(x_1', x_2', ..., x_n'))\]
    and
    \[D(D^{\alpha-1}(x_1, x_2, ..., x_n))+D(D^{\alpha-1}(x_1', x_2', ..., x_n')),\]
    which equals $D^{\alpha}(x_1, x_2, ..., x_n)+D^{\alpha}(x_1', x_2', ..., x_n')$.

    Now suppose that $(x_1, x_2, ..., x_n),(x_1', x_2', ..., x_n') \in K(\mathbb{Z}_m^n)$. If $d=P_m(n)$, then $D^d(x_1, x_2, ..., x_n)=(x_1, x_2, ..., x_n)$ and $D^d(x_1', x_2', ..., x_n')=(x_1', x_2', ..., x_n')$. Therefore, 
    \[D^d((x_1, x_2, ..., x_n)+(x_1', x_2', ..., x_n'))=D^d(x_1, x_2, ..., x_n)+D^d(x_1', x_2', ..., x_n'),\]
    which equals
    $(x_1, x_2, ..., x_n)+(x_1', x_2', ..., x_n')$
    and thus 
    \[((x_1, x_2, ..., x_n)+(x_1', x_2', ..., x_n'))\in K(\mathbb{Z}_m^n).\] 
    It follows then that $K(\mathbb{Z}_m^n) \leq \mathbb{Z}_m^n$.
    \end{proof}
    \indent When examining a Ducci sequence, it is useful to be able to know what $D^r(\mathbf{u})$ is given $\mathbf{u} \in \mathbb{Z}_m^n$. To do this,  define $a_{r,s},r,s \in \mathbb{Z}$, $r \geq 0, 1 \leq s \leq n$ so that $D^r(0,0,...,0,1)=(a_{r,n}, a_{r, n-1}, ..., a_{r,1})$. Now, any tuple $(x_1, x_2, ..., x_n) \in \mathbb{Z}_m^n$ can be written as 
    \[(x_1, x_2, ..., x_n)=\sum_{s=1}^n x_sH^{-s}(0,0,..,0,1),\]
    which gives
    \[D^r(x_1, x_2, ..., x_n)=\sum_{s=1}^n x_sH^{-s}(a_{r,n}, a_{r,n-1}, ..., a_{r,1})\]
    or
    \[\sum_{s=1}^n x_s(a_{r,s}, a_{r,s-1}, ..., a_{r,s+1}).\]
    Using this, we can now define $a_{r,s}$ as the coefficient on $x_{s-i+1}$ in the $i$th coordinate of $D^r(x_1, x_2, ..., x_n)$, with 
    \[a_{0,s}=
    \begin{cases}
        0 & s \neq 1\\
        1 & s=1
    \end{cases}.\]
    We now aim to prove a few more observations about $a_{r,s}$.
    \begin{theorem}\label{BigCoeffThm}
   Let $r \geq 1$, $1 \leq s \leq n$
        \begin{enumerate}
         \item $a_{r,s}=a_{r-1,s}+a_{r-1,s-1}$.\\
        \item For $r<n$, $a_{r,s}=\displaystyle{\binom{r}{s-1}}$ .\\
        \item $a_{r+t,s}=\displaystyle{\sum_{i=1}^n a_{t,i}a_{r,s-i+1}}$ where $t \geq 1$.\\  
        \end{enumerate}
    \end{theorem}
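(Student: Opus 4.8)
The plan is to establish the three parts in order, using part (1) as the foundation for the others. For part (1), I would work directly from the recursive structure of $D$. Since $D^r(0,0,\dots,0,1) = D(D^{r-1}(0,0,\dots,0,1))$ and $D = I + H$, writing $D^{r-1}(0,0,\dots,0,1) = (a_{r-1,n}, a_{r-1,n-1}, \dots, a_{r-1,1})$, the action of $D$ adds each coordinate to its successor. Tracking which coordinate of $D^r(x_1,\dots,x_n)$ the coefficient $a_{r,s}$ sits in (it is the coefficient of $x_{s-i+1}$ in the $i$th coordinate), applying $D$ once sends $a_{r-1,s}$-weighted terms and $a_{r-1,s-1}$-weighted terms into the same coordinate-and-variable slot, giving $a_{r,s} = a_{r-1,s} + a_{r-1,s-1}$. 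Here indices on $s$ should be read modulo $n$ in the cyclic sense (so $a_{r,s-1}$ for $s=1$ wraps appropriately), but for the ranges that matter in parts (2) and (3) no wraparound occurs.

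For part (2), I would induct on $r$ using part (1), with the base case $r=0$ handled by the given formula for $a_{0,s}$, which matches $\binom{0}{s-1}$ (equal to $1$ when $s=1$ and $0$ otherwise, for $1 \le s \le n$). The inductive step is exactly Pascal's rule: $a_{r,s} = a_{r-1,s} + a_{r-1,s-1} = \binom{r-1}{s-1} + \binom{r-1}{s-2} = \binom{r}{s-1}$. The restriction $r < n$ is what guarantees that the cyclic wraparound in the index $s$ never interferes — when $r < n$, the "new" contributions propagate only through coordinates $s$ with $s-1 \le r < n$, so we never add in a term that has wrapped around the cycle; I would state this carefully, perhaps by first proving $a_{r,s} = 0$ for $s > r+1$ when $r < n$.

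For part (3), the identity $a_{r+t,s} = \sum_{i=1}^n a_{t,i} a_{r,s-i+1}$ is a statement that $D^{r+t} = D^t \circ D^r$ expressed in coordinates. I would prove it by applying $D^t$ to $D^r(0,0,\dots,0,1) = \sum_s a_{r,s} H^{-s}(0,\dots,0,1)$: since $D^t$ is an endomorphism commuting with $H$, we get $D^{t+r}(0,\dots,0,1) = \sum_s a_{r,s} H^{-s}(D^t(0,\dots,0,1)) = \sum_s a_{r,s} H^{-s}\left(\sum_i a_{t,i} H^{-i}(0,\dots,0,1)\right)$, and collecting the coefficient of the appropriate basis vector (indexing by $H^{-(s+i-1)}$ or similar) yields the convolution formula, with the index arithmetic on $s-i+1$ taken cyclically mod $n$. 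Alternatively one could induct on $t$ using part (1) at each step. The main obstacle I anticipate is bookkeeping: being scrupulous about the index conventions (the reversed ordering $D^r(0,\dots,0,1)=(a_{r,n},\dots,a_{r,1})$, the cyclic interpretation of subscripts, and the off-by-one in "coefficient of $x_{s-i+1}$ in the $i$th coordinate") so that the three formulas come out with exactly the stated index shifts rather than shifted by one. Once the conventions are pinned down, each part is a short induction or a direct expansion.
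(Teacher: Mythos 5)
Your arguments for (1) and (2) match the paper's: (1) is the same direct computation of the first coordinate of $D\bigl(D^{r-1}(x_1,\dots,x_n)\bigr)$, and (2) is the same induction via Pascal's rule, with the paper handling the wraparound concern exactly as you suggest, through the convention $\binom{r}{s-1}=0$ for $s-1>r$ and a separate check that $a_{r,1}=\binom{r-1}{0}+\binom{r-1}{n-1}=1$ when $r<n$. For (3) your primary route differs from the paper's: the paper inducts on $t$, using the recurrence (1) to expand $a_{r+t,s}=\sum_i a_{t-1,i}a_{r+1,s-i+1}$, splitting $a_{r+1,s-i+1}=a_{r,s-i+1}+a_{r,s-i}$, reindexing cyclically, and recombining via (1); you instead read the identity off from $D^{r+t}=D^t\circ D^r$, expanding $D^r(0,\dots,0,1)$ in the $H$-orbit of $(0,\dots,0,1)$ and using that $D^t$ commutes with $H$, so the coefficients convolve. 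Your route is conceptually cleaner (it is essentially multiplication in $\mathbb{Z}_m[H]$ with $H^n=I$, and needs no basis cases), while the paper's stays entirely at the level of the coefficient recurrence and avoids setting up the operator bookkeeping. One caution on that bookkeeping, which you rightly flagged as the main risk: with the paper's convention $D^r(0,\dots,0,1)=(a_{r,n},\dots,a_{r,1})$, the correct expansion is $D^r(0,\dots,0,1)=\sum_s a_{r,s}H^{s-1}(0,\dots,0,1)$, not $\sum_s a_{r,s}H^{-s}(0,\dots,0,1)$; with the former the convolution lands exactly on $a_{r+t,s}=\sum_i a_{t,i}a_{r,s-i+1}$, whereas the latter would leave you off by one. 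With that index fixed, your argument is complete and correct.
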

    \begin{proof}         
          \textbf{(1)}: 
 Letting $i=1$, we only need to show $a_{r,s}$ as the coefficient on $x_s$ in the first entry of $D^r(x_1, x_2, ..., x_n)$. Note that by how $a_{r,s}$ is defined, $D^r(x_1, x_2, ..., x_n)$ is
   \[D((\sum_{s=1}^n x_s H^{-s}(a_{r-1,n}, a_{r-1,n-1}, ..., a_{r-1,1})).\]
    The first entry of this tuple is
    \[(a_{r-1,1}+a_{r-1,n})x_1+(a_{r-1,2}+a_{r-1,1})x_2+ \cdots +(a_{r,n}+a_{r,n-1})x_n.\]
    By definition,$a_{r,s}$ is the coefficient on $x_s$ in the first entry, which the above tells us is also $a_{r-1,s}+a_{r-1,s-1}$ and (1) follows. 
    
    \indent We prove (2) and (3) by induction.
    
     \textbf{(2)}: \textbf{Basis Step} $\mathbf{r=0}$: This follows from (1) and 
      \[a_{0,s}=
    \begin{cases}
        0 & s \neq 1\\
        1 & s=1
    \end{cases}.\]
    
    \textbf{Inductive Step:} Note that if $s-1>r$, we use the convention that 
    \[a_{r,s}=\binom{r}{s-1}=0.\] Assume $a_{r-1,s}=\displaystyle{\binom{r-1}{s-1}}$ when $r<n$. By (1), we have that for $1<s \leq n$,  
    \[a_{r,s}=a_{r-1,s}+a_{r-1,s-1},\]
 which by induction is 
    \[\binom{r-1}{s-1}+\binom{r-1}{s-2}\]
     or
    \[\binom{r}{s-1}.\]
    For $s=1$, we have 
    \[a_{r,1}=a_{r-1,1}+a_{r-1,n}.\]
   By induction, this is
    \[\binom{r-1}{0}+\binom{r-1}{n-1}\]
     which is $1$ or $\displaystyle{\binom{r}{0}}$ as long as $r<n$. 
    
    \textbf{(3)}: We prove this via induction on $t$. 
    
\textbf{Basis Steps} $\mathbf{t=1,2}$: Calculating $a_{r,s}$ in terms of $a_{r-1,i}$ terms, 
\[a_{r,s}=a_{r-1,s}+a_{r-1,s-1},\]
or breaking it down further in terms of $a_{r-2,i}$,
\[a_{r,s}=a_{r-2,s}+2a_{r-2,s-1}+a_{r-2,s-2}.\]
This is
$a_{2,1}a_{r-2,s}+a_{2,2}a_{r-2,s-1}+a_{2,3}a_{r-2,s-2}$ and the basis case follows.

\textbf{Inductive Step:}
Assume that for $t'<t$, $a_{r+t',s}=\displaystyle{\sum_{i=1}^na_{t',i}a_{r,s-i+1}}$. Then calculating $a_{r+t,s}$, we have
\[a_{r+t,s}=\sum_{i=1}^n a_{t-1,i}a_{r+1,s-i+1} \, .\]
Breaking down $a_{r,s-i+1}$, this is
\[\sum_{i=1}^na_{t-1,i}(a_{r,s-i+1}+a_{r,s-i}) \,.\]
Distributing $a_{t-1,i}$ and breaking the sum up, this is
\begin{equation}\label{Equation_1st_ars_lemma}
\sum_{i=1}^na_{t-1,i}a_{r,s-i+1}+\sum_{i=1}^na_{t-1,i}a_{r,s-i} \, .
\end{equation}
Note that for the sum $\displaystyle{\sum_{i=1}^na_{t-1,i}a_{r,s-i}}$ from Expression (\ref{Equation_1st_ars_lemma}), we have
\[\sum_{i=1}^na_{t-1,i}a_{r,s-i}=\sum_{i=2}^{n+1}a_{t-1,i-1}a_{r,s-i+1} \, .\]
Because the $s$ coordinate of $a_{r,s}$ is reduced modulo $n$, this is
\[\sum_{i=1}^na_{t-1,i-1}a_{r,s-i+1}\, .\]
 Therefore, Expression (\ref{Equation_1st_ars_lemma}) is equal to
\[\sum_{i=1}^n(a_{t-1,i}+a_{t-1,i-1})a_{r,s-i+1}\]
or
\[\sum_{i=1}^na_{t,i}a_{r,s-i+1}\, .\]
(3) follows from here.
    \end{proof}
    We now prove some other useful facts about the $a_{r,s}$ coefficients that we will need later.
     \begin{corollary}
        \[a_{n,1}=
        \begin{cases}
             \displaystyle{\binom{n}{s-1}} & s \neq 1\\
            2 & s=1
        \end{cases}.\]
    \end{corollary}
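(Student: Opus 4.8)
The plan is to derive this directly from Theorem~\ref{BigCoeffThm}, reading the left-hand side as $a_{n,s}$ (the case $s=1$ being precisely the one in which the recursion wraps the second index around modulo $n$). First I would apply part (1) of Theorem~\ref{BigCoeffThm} with $r=n$ to write
\[a_{n,s}=a_{n-1,s}+a_{n-1,s-1},\]
keeping in mind that the second subscript is taken modulo $n$, so that when $s=1$ the term $a_{n-1,s-1}$ is in fact $a_{n-1,n}$.

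Next, since $n-1<n$, part (2) of Theorem~\ref{BigCoeffThm} applies to each coefficient $a_{n-1,j}$ appearing, giving $a_{n-1,j}=\binom{n-1}{j-1}$ (with the stated convention $\binom{n-1}{j-1}=0$ once $j-1>n-1$, and $\binom{n-1}{n-1}=1$). For $2\le s\le n$ there is no wraparound, so
\[a_{n,s}=\binom{n-1}{s-1}+\binom{n-1}{s-2}=\binom{n}{s-1}\]
by Pascal's rule. For $s=1$ the wraparound replaces the (vacuous) lower term by $a_{n-1,n}=\binom{n-1}{n-1}=1$, and together with $a_{n-1,1}=\binom{n-1}{0}=1$ this gives $a_{n,1}=1+1=2$.

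There is essentially no obstacle here: this is a one-step corollary of Theorem~\ref{BigCoeffThm}. The only point that needs care is the convention that the second index of $a_{r,s}$ lives in $\mathbb{Z}_n$, and it is exactly this cyclic identification of $s-1=0$ with $n$ that produces the exceptional value $2$ at $s=1$ (since $\binom{n-1}{n-1}=1$ is nonzero, whereas for $r<n$ the analogous term $\binom{r-1}{n-1}$ vanishes); everything else is Pascal's identity.
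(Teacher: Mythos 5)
Your proof is correct and follows essentially the same route as the paper: apply the recursion $a_{n,s}=a_{n-1,s}+a_{n-1,s-1}$ from Theorem~\ref{BigCoeffThm}(1), evaluate the $a_{n-1,\cdot}$ terms via part (2) since $n-1<n$, and use Pascal's rule for $s>1$ while the cyclic wraparound $a_{n-1,n}=\binom{n-1}{n-1}=1$ yields the exceptional value $2$ at $s=1$. Your remark correctly identifies the wraparound (and the typo $a_{n,1}$ for $a_{n,s}$ in the statement) as the only delicate point; nothing further is needed.
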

    \begin{proof}
       Let $s>1$. Then 
      $a_{n,s}=a_{n-1,s}+a_{n-1,s-1}$ gives us
      \[a_{n,s}=\binom{n-1}{s-1}+\binom{n-1}{s-2}\]
      or $\displaystyle{\binom{n}{s-1}}$ by Theorem \ref{BigCoeffThm}
      
    For $s=1$, 
    $a_{n,1}=a_{n-1,1}+a_{n-1,n}$ gives us
    \[\binom{n-1}{0}+\binom{n-1}{n-1}\]
    or $2$ by Theorem \ref{BigCoeffThm}.
    \end{proof}
    We have one last lemma about the $a_{r,s}$ that is true for all $n,m$:
    \begin{lemma}\label{a_rs_when_=}
    For $r \geq 0$,
        \[a_{r,s}=a_{r,r-s+2} \, .\]
    \end{lemma}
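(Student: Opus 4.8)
The plan is to prove the identity $a_{r,s} = a_{r,r-s+2}$ for all $r \geq 0$ by exploiting the binomial description of the coefficients from Theorem~\ref{BigCoeffThm}(2), which holds whenever the row index is small relative to $n$. First I would observe that the statement is really a claim about the \emph{first row of $D$-iteration before wraparound occurs}: for $r < n$, Theorem~\ref{BigCoeffThm}(2) gives $a_{r,s} = \binom{r}{s-1}$, with the convention that this is $0$ when $s-1 > r$ (or when $s-1<0$). The symmetry of binomial coefficients, $\binom{r}{s-1} = \binom{r}{r-(s-1)} = \binom{r}{r-s+1}$, immediately gives $a_{r,s} = \binom{r}{r-s+1} = \binom{r}{(r-s+2)-1} = a_{r,r-s+2}$, again by Theorem~\ref{BigCoeffThm}(2) applied to the index $r-s+2$. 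So for $r < n$ the identity is essentially Pascal's symmetry, once one checks the edge cases (when $s-1<0$ or $s-1>r$ both sides vanish, and when $s$ or $r-s+2$ falls outside $\{1,\dots,n\}$ one must interpret the second index modulo $n$ consistently with the convention $a_{r,s}=\binom{r}{s-1}=0$).

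The remaining work is the case $r \geq n$, and here I would argue by induction on $r$, using the Pascal-type recurrence $a_{r,s} = a_{r-1,s} + a_{r-1,s-1}$ from Theorem~\ref{BigCoeffThm}(1), where the second index is read modulo $n$. Assuming $a_{r-1,s} = a_{r-1,(r-1)-s+2} = a_{r-1,r-s+1}$ for all $s$, one computes
\[
a_{r,s} = a_{r-1,s} + a_{r-1,s-1} = a_{r-1,r-s+1} + a_{r-1,r-s+2} = a_{r-1,(r-s+2)} + a_{r-1,(r-s+2)-1} = a_{r,r-s+2},
\]
where the last step is again Theorem~\ref{BigCoeffThm}(1) applied at index $r-s+2$. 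Thus the recurrence propagates the symmetry from row $r-1$ to row $r$, and the base case $r=0$ (and $r=1$) is checked directly: $a_{0,s} = a_{0,2-s}$ reduces, after reading indices mod $n$, to the statement that the only nonzero entry sits at $s=1$, which is fixed by $s \mapsto 2-s$.

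The main obstacle I anticipate is purely bookkeeping rather than conceptual: making sure the index $r-s+2$ is handled consistently with the modular reduction in the second subscript of $a_{r,s}$, and confirming that the ``out of range'' conventions ($a_{r,s}=0$ when $s-1$ is negative or exceeds $r$, versus reducing $s$ modulo $n$) do not clash. Concretely, one should verify that in the recurrence step the substitution of indices $r-s+1$ and $r-s+2$ for $s-1$ and $s$ in the inductive hypothesis is legitimate for \emph{every} value of $s$ in $\{1,\dots,n\}$, including the wraparound cases $s=1$ and $s=n$, where the Pascal recurrence uses $a_{r-1,n}$. I would dispose of this by noting that the recurrence of Theorem~\ref{BigCoeffThm}(1) is stated uniformly for $1 \leq s \leq n$ with indices mod $n$, so the same uniform statement applies with $s$ replaced by $r-s+2$ mod $n$, and no separate edge-case analysis is needed beyond the base step.
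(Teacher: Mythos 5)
Your proposal is correct and follows essentially the same route as the paper: the base case $r<n$ via the binomial description $a_{r,s}=\binom{r}{s-1}$ and Pascal symmetry (Theorem~\ref{BigCoeffThm}(2)), and the inductive step via the recurrence $a_{r,s}=a_{r-1,s}+a_{r-1,s-1}$ applied at both $s$ and $r-s+2$ (Theorem~\ref{BigCoeffThm}(1)). Your extra attention to the modular-index bookkeeping is reasonable but does not change the argument, which matches the paper's proof.
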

    \begin{proof}
       We work by induction on $r$.
       
\textbf{Basis Step:} We take advantage of Theorem \ref{BigCoeffThm} to see that when $r<n$, 
\[a_{r,s}=\binom{r}{s-1}=\binom{r}{r-s+1}=a_{r,r-s+2}\, .\]

\textbf{Inductive Step:} Suppose that the theorem is true for $r-1$. Then we have 
\[a_{r-1,s}=a_{r-1,r-s+1}\]
 and
  \[a_{r-1,s-1}=a_{r-1, r-s+2}\, .\]
   Then because $a_{r,s}=a_{r-1,s}+a_{r-1,s-1}$, 

\[a_{r,s}=a_{r-1,r-s+1}+a_{r-1,r-s+2},\]

which equals $a_{r,r-s+2}$.
    \end{proof}

\section{All Tuples Vanish in $\mathbb{Z}_{2^l}^{2^k}$}\label{Main_Result}
Recall that it is our main goal to prove that for $m=2^l, n=2^k$, then $L_m(n)=(l+1)2^{k-1}$. In order to do this, we first aim to prove a number of lemmas, starting with a few lemmas that explore the value of certain binomial coefficients. We believe that Lemmas \ref{middlebinomcoeff}-\ref{middlebinomcoeffpt2}, and \ref{binomcoeff_3mod4} are known but we are including the following proofs for the sake of completeness. Throughout these proofs, we rely on the well known fact that $\displaystyle{\binom{2^j}{t}}$ is even when $t \neq 1,2^j$, a proof of which can be found in Theorem 3 of \cite{Fine}.
\begin{lemma}\label{middlebinomcoeff}
When $j \geq 2$
    \[\binom{2^j}{2^{j-1}}\equiv 2 \; \text{mod} \; 4.\]
\end{lemma}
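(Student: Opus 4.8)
The plan is to compute $\binom{2^j}{2^{j-1}} \pmod 4$ by combining two facts: that $\binom{2^j}{t}$ is even for $0 < t < 2^j$ (cited from \cite{Fine}), and a multiplicative identity relating the central coefficient to itself. First I would write down the absorption/recursion identity
\[
\binom{2^j}{2^{j-1}} = \binom{2^j-1}{2^{j-1}-1} + \binom{2^j-1}{2^{j-1}},
\]
and note that by the symmetry $\binom{2^j-1}{2^{j-1}-1} = \binom{2^j-1}{2^{j-1}}$, so $\binom{2^j}{2^{j-1}} = 2\binom{2^j-1}{2^{j-1}}$. Thus it suffices to show $\binom{2^j-1}{2^{j-1}}$ is odd.

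To see that $\binom{2^j-1}{2^{j-1}}$ is odd, I would use Kummer's theorem (or Lucas' theorem) on the binary digits: $2^j - 1$ in binary is a string of $j$ ones, so every binomial coefficient $\binom{2^j-1}{t}$ with $0 \le t \le 2^j-1$ is odd, since each binary digit of $t$ is $\le$ the corresponding digit of $2^j-1$. In particular $\binom{2^j-1}{2^{j-1}}$ is odd, so $\binom{2^j}{2^{j-1}} = 2 \cdot (\text{odd}) \equiv 2 \pmod 4$.

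Alternatively, since the paper may prefer to avoid invoking Lucas/Kummer and stick to the stated fact about $\binom{2^j}{t}$ being even, I would instead argue: write $\binom{2^j}{2^{j-1}} = \sum_{i} \binom{2^{j-1}}{i}\binom{2^{j-1}}{2^{j-1}-i} = \sum_i \binom{2^{j-1}}{i}^2$ by Vandermonde. The terms with $i = 0$ and $i = 2^{j-1}$ each contribute $1$; the middle term $i = 2^{j-2}$ contributes $\binom{2^{j-1}}{2^{j-1}}^2$... this gets circular, so the cleaner route is really the Vandermonde split into $\binom{2^{j-1}}{0}^2 + \binom{2^{j-1}}{2^{j-1}}^2 + \sum_{0<i<2^{j-1}}\binom{2^{j-1}}{i}^2 \equiv 2 + \sum_{0<i<2^{j-1}}(\text{even})^2 \equiv 2 \pmod 4$, using that each interior $\binom{2^{j-1}}{i}$ is even (valid since $j \ge 2$ means $2^{j-1} \ge 2$ is a genuine power of $2$) and hence its square is divisible by $4$. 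I would present this Vandermonde argument as the main proof since it only uses the fact the paper has already quoted.

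The main obstacle is essentially bookkeeping: making sure the edge cases $i=0$ and $i = 2^{j-1}$ are separated out correctly and that the hypothesis $j \ge 2$ is genuinely used (for $j = 1$, $\binom{2}{1} = 2$ still, but $\binom{2^{j-1}}{i}$ with $2^{j-1} = 1$ has no interior terms, so the statement happens to hold anyway — still, the clean "interior terms are even" phrasing wants $2^{j-1}$ to be a power of two that is at least $2$). No deep difficulty is expected; the whole argument is a few lines once the Vandermonde identity is invoked.
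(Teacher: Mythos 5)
Your main argument---splitting the Chu--Vandermonde expansion $\binom{2^j}{2^{j-1}}=\sum_i \binom{2^{j-1}}{i}^2$ into the two endpoint terms contributing $1+1$ plus interior terms whose squares are divisible by $4$---is correct and is exactly the proof the paper gives. The alternative sketch via Pascal's rule, symmetry, and Lucas' theorem is also valid, but since you designate the Vandermonde computation as the main proof, there is nothing further to compare.
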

\begin{proof}
By the Chu-Vandermonde identity [9], $\displaystyle{\binom{2^j}{2^{j-1}}}$ is
    \[\sum_{i=0}^{2^{j-1}} \binom{2^{j-1}}{i}^2=\binom{2^{j-1}}{0}^2+\binom{2^{j-1}}{2^{j-1}}^2+\sum_{i=1}^{2^{j-1}-1}\binom{2^{j-1}}{i}^2,\]
    which is congruent to $ 2 \; \text{mod} \; 4.$
\end{proof}
For the other binomial coefficients $\displaystyle{\binom{2^j}{t}}$, we have the following lemma:
\begin{lemma}\label{binomcoeffmod4}
    When $t \neq 0, 2^{j-1}, 2^j$ and $j \geq 2$, then 
    \[\binom{2^j}{t}\equiv 0 \; \text{mod} \; 4.\]
\end{lemma}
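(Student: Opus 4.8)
The plan is to pin down the exact $2$-adic valuation of $\binom{2^j}{t}$. The key tool is the absorption identity $t\binom{2^j}{t} = 2^j\binom{2^j-1}{t-1}$, together with the observation that $\binom{2^j-1}{t-1}$ is odd for every $t$ with $1 \le t \le 2^j$. The latter follows from Lucas' theorem: since $2^j-1$ has each of its first $j$ binary digits equal to $1$, every binomial coefficient $\binom{2^j-1}{k}$ with $0 \le k \le 2^j-1$ is congruent to $1$ modulo $2$. (Equivalently, one may invoke Kummer's theorem, noting that $k$ and $2^j-1-k$ are bitwise complements, so their binary addition produces no carries.)

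Granting this, I would take the $2$-adic valuation $v_2$ of both sides of $t\binom{2^j}{t} = 2^j\binom{2^j-1}{t-1}$. Since $v_2\!\left(\binom{2^j-1}{t-1}\right)=0$, this gives $v_2(t)+v_2\!\left(\binom{2^j}{t}\right)=j$, that is, $v_2\!\left(\binom{2^j}{t}\right)=j-v_2(t)$.

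To conclude, I would observe that when $1\le t\le 2^j-1$ and $t\ne 2^{j-1}$ one has $v_2(t)\le j-2$: the only $t$ in this range with $v_2(t)=j-1$ is $t=2^{j-1}$, which is excluded, and $v_2(t)=j$ would force $t\ge 2^j$. Hence $v_2\!\left(\binom{2^j}{t}\right)=j-v_2(t)\ge 2$, which is exactly the assertion $\binom{2^j}{t}\equiv 0\pmod 4$.

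I do not expect a genuine obstacle here; the only point requiring care is the justification that $\binom{2^j-1}{t-1}$ is odd, which is where Lucas' (or Kummer's) theorem enters. If one prefers to remain within the elementary toolkit already used for Lemma \ref{middlebinomcoeff}, the same conclusion follows by induction on $j$: applying the Chu--Vandermonde identity $\binom{2^j}{t}=\sum_{i=0}^{t}\binom{2^{j-1}}{i}\binom{2^{j-1}}{t-i}$ and reducing modulo $4$, every term vanishes except possibly those with $i$ and $t-i$ both in $\{0,2^{j-2},2^{j-1}\}$ (using the inductive hypothesis together with Lemma \ref{middlebinomcoeff}); for $t\notin\{0,2^{j-1},2^j\}$ such terms occur only when $t\in\{2^{j-2},\,3\cdot 2^{j-2}\}$, in which case they arise as two equal copies, each $\equiv 2\pmod 4$, whose sum is $0$ modulo $4$.
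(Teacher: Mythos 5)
Your proposal is correct, but your primary argument is a genuinely different route from the paper's. The paper proves the lemma by induction on $j$ via the Chu--Vandermonde identity, splitting into the cases $0<t<2^{j-2}$, $2^{j-2}<t<2^{j-1}$, $t=2^{j-2}$, and handling $2^{j-1}<t<2^j$ by the symmetry $\binom{2^j}{t}=\binom{2^j}{2^j-t}$ --- essentially the inductive argument you sketch at the end as a fallback, with your ``two equal copies, each $\equiv 2 \pmod 4$'' observation corresponding to the paper's explicit treatment of $t=2^{j-2}$. Your main argument instead uses the absorption identity $t\binom{2^j}{t}=2^j\binom{2^j-1}{t-1}$ together with the oddness of $\binom{2^j-1}{t-1}$ (which is exactly the paper's Lemma \ref{alloddbinomcoeff}, also obtained from Lucas' theorem, so you are not importing anything foreign to the paper's toolkit) to get the exact valuation $v_2\bigl(\binom{2^j}{t}\bigr)=j-v_2(t)$; since the only $t$ with $1\le t\le 2^j-1$ and $v_2(t)\ge j-1$ is $t=2^{j-1}$, the excluded case, the conclusion follows. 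This buys you more than the paper's proof: it is induction-free, it immediately recovers Lemma \ref{middlebinomcoeff} (taking $t=2^{j-1}$ gives valuation exactly $1$), and it pins down the precise power of $2$ for every $t$, whereas the paper's inductive Chu--Vandermonde scheme has the advantage of staying within the single technique reused for Lemmas \ref{middlebinomcoeff}, \ref{middlebinomcoeffpt2}, and \ref{binomcoeff_3mod4}, needing only Fine's evenness fact. (Note your valuation identity does not by itself yield the mod $8$ refinement of Lemma \ref{middlebinomcoeffpt2}, so the paper's machinery is still needed elsewhere.)
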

\begin{proof}
    First to address the cases when $t \in \{0, 2^{j-1}, 2^j\}$, we know $\displaystyle{\binom{2^j}{0}=\binom{2^j}{2^j}=1}$ and $\displaystyle{\binom{2^j}{2^{j-1}}}\equiv 2 \; \text{mod} \; 4$. We prove the rest of the lemma by induction on $j$.
    
    \textbf{Basis Step} $\mathbf{j=2}$: 
    \[\binom{4}{1}=\binom{4}{3}=4. \]
    
    \textbf{Inductive Step:} Assume that $\displaystyle{\binom{2^{j-1}}{t}}\equiv 0 \; \text{mod} \; 4$ when $t \neq 0, 2^{j-2}, 2^{j-1}$. We start with when $0 <t<2^{j-2}$ and use the Chu-Vandermonde identity to see that $\displaystyle{\binom{2^j}{t}}$ is
    \[\sum_{i=0}^{t}\binom{2^{j-1}}{i}\binom{2^{j-1}}{t-i}=\binom{2^{j-1}}{0}\binom{2^{j-1}}{t}+\binom{2^{j-1}}{t}\binom{2^{j-1}}{0}+\sum_{i=1}^{t-1}\binom{2^{j-1}}{i}\binom{2^{j-1}}{t-i},\]
  which is congruent to $0 \; \text{mod} \; 4$  by induction. Next we take $2^{j-2}<t<2^{j-1}$; calculating $\displaystyle{\binom{2^j}{t}}$ yields
    \[\binom{2^{j-1}}{0}\binom{2^{j-1}}{t}+\binom{2^{j-1}}{t}\binom{2^{j-1}}{0}+\binom{2^{j-1}}{2^{j-2}}\binom{2^{j-1}}{t-2^{j-2}}\]\[+\binom{2^{j-1}}{t-2^{j-2}}\binom{2^{j-1}}{2^{j-2}}+ \smashoperator{ \sum_{\substack{i=1 \\ i \neq 2^{j-2}, t-2^{j-2}}}^{t-1}}\binom{2^{j-1}}{i}\binom{2^{j-1}}{t-i},\]
    which by induction is congruent to $0 \; \text{mod} \; 4$. Now if we take $t=2^{j-2}$, then $\displaystyle{\binom{2^j}{2^{j-2}}}$ is
    \[\binom{2^{j-1}}{0}\binom{2^{j-1}}{2^{j-2}}+\binom{2^{j-1}}{2^{j-2}}\binom{2^{j-1}}{0}+\sum_{i=1}^{2^{j-2}-1}\binom{2^{j-1}}{i}\binom{2^{j-1}}{t-i},\] which is equivalent to $ 0 \; \text{mod} \; 4$
    by Lemma \ref{middlebinomcoeff} and induction. 
    
    \indent Now suppose that $2^{j-1}<t<2^j$. Then $0<2^j-t<2^{j-1}$, so 
    \[\binom{2^j}{t}=\binom{2^j}{2^j-t} \equiv 0 \; \text{mod} \; 4\]
     and the lemma follows. 
    
\end{proof}

We still need to know more about the value of $\displaystyle{\binom{2^j}{2^{j-1}}}$ so we  prove the following lemma:
\begin{lemma}\label{middlebinomcoeffpt2}
    For $j \geq 2$, 
    \[\binom{2^j}{2^{j-1}} \equiv 6 \; \text{mod} \; 8.\]
\end{lemma}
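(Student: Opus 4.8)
The plan is to prove $\binom{2^j}{2^{j-1}} \equiv 6 \pmod 8$ for $j \geq 2$ by induction on $j$, using the Chu--Vandermonde identity to split $\binom{2^j}{2^{j-1}}$ in terms of $\binom{2^{j-1}}{i}$, in the same spirit as the proof of Lemma \ref{middlebinomcoeff}. For the basis step $j=2$ one simply checks $\binom{4}{2} = 6 \equiv 6 \pmod 8$. For the inductive step, write
\[
\binom{2^j}{2^{j-1}} = \sum_{i=0}^{2^{j-1}} \binom{2^{j-1}}{i}\binom{2^{j-1}}{2^{j-1}-i} = \sum_{i=0}^{2^{j-1}} \binom{2^{j-1}}{i}^2 .
\]

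Now I would isolate the terms that are not divisible by $8$. The terms $i = 0$ and $i = 2^{j-1}$ each contribute $1^2 = 1$. The term $i = 2^{j-2}$ contributes $\binom{2^{j-1}}{2^{j-2}}^2$, and by the inductive hypothesis $\binom{2^{j-1}}{2^{j-2}} \equiv 6 \pmod 8$, but actually I only need it modulo $8$ after squaring, so I should be careful: $6^2 = 36 \equiv 4 \pmod 8$, and more to the point, since $\binom{2^{j-1}}{2^{j-2}} \equiv 2 \pmod 4$ (Lemma \ref{middlebinomcoeff}), its square is $\equiv 4 \pmod{16}$, hence $\equiv 4 \pmod 8$. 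For all remaining indices $i$ with $i \notin \{0, 2^{j-2}, 2^{j-1}\}$, Lemma \ref{binomcoeffmod4} gives $\binom{2^{j-1}}{i} \equiv 0 \pmod 4$, so $\binom{2^{j-1}}{i}^2 \equiv 0 \pmod{16}$ and in particular $\equiv 0 \pmod 8$. Summing: $\binom{2^j}{2^{j-1}} \equiv 1 + 1 + 4 + 0 = 6 \pmod 8$, as desired.

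There is a subtlety I would flag as the main thing to get right: the index $i = 2^{j-2}$ only exists as a ``special'' index when $j - 2 \geq 1$, i.e. $j \geq 3$; when $j = 2$ the middle index $2^{j-2} = 1$ coincides with neither $0$ nor $2^{j-1}=2$, and $\binom{2}{1} = 2$, so the basis step must be handled separately anyway (which it is). So the induction should really be taken to start the inductive step at $j \geq 3$ with base case $j = 2$, and I should double-check that for $j = 3$ the hypothesis invoked — $\binom{4}{2} \equiv 2 \pmod 4$ — is exactly what Lemma \ref{middlebinomcoeff} provides. The only genuine obstacle is bookkeeping: making sure every term of the Chu--Vandermonde sum other than the three flagged ones is handled by Lemma \ref{binomcoeffmod4}, and that squaring a quantity that is $2 \bmod 4$ lands in $4 \bmod 8$ (indeed $4 \bmod{16}$), not something else. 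Everything else is routine arithmetic modulo $8$.
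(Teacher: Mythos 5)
Your argument is correct, and it uses the same skeleton as the paper: the Chu--Vandermonde expansion $\binom{2^j}{2^{j-1}}=\sum_{i=0}^{2^{j-1}}\binom{2^{j-1}}{i}^2$, isolating the three indices $i=0,\,2^{j-2},\,2^{j-1}$, and killing every other term modulo $8$ via Lemma \ref{binomcoeffmod4} (each such $\binom{2^{j-1}}{i}\equiv 0 \bmod 4$, so its square is $0 \bmod 16$). The one genuine difference is how the middle term is controlled: the paper runs an induction on $j$ and uses the inductive hypothesis $\binom{2^{j-1}}{2^{j-2}}\equiv 6 \bmod 8$ (so its square is $36\equiv 4 \bmod 8$), whereas you invoke only Lemma \ref{middlebinomcoeff}, i.e. $\binom{2^{j-1}}{2^{j-2}}\equiv 2 \bmod 4$, and observe that any $x\equiv 2 \bmod 4$ has $x^2\equiv 4 \bmod 16$. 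This makes your induction vacuous: the computation is valid directly for every $j\geq 3$, with $j=2$ checked by hand, so the lemma follows from Lemmas \ref{middlebinomcoeff} and \ref{binomcoeffmod4} with no induction at all --- a mild simplification over the paper's phrasing, which buys nothing extra in generality but trims a hypothesis; the paper's version, in turn, just keeps the proof in the same inductive format as the surrounding lemmas. Your side remarks (that the special index $2^{j-2}$ is distinct from $0$ and $2^{j-1}$ only for $j\geq 3$, and that for $j=3$ the needed input is exactly $\binom{4}{2}\equiv 2 \bmod 4$) are accurate and consistent with the paper's treatment of the basis step.
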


\begin{proof}
    \textbf{Basis Step} $\mathbf{j=2}:$ 
    \[\binom{4}{2}=6. \]
    
    \textbf{Inductive Step:} Assume $\displaystyle{\binom{2^{j-1}}{2^{j-2}}}\equiv 6 \; \text{mod} \; 8$. By the Chu-Vandermonde Identity, $\displaystyle{\binom{2^j}{2^{j-1}}}$ is
    \[\sum_{i=0}^{2^{j-1}} \binom{2^{j-1}}{i}^2=\binom{2^{j-1}}{0}^2+\binom{2^{j-1}}{2^{j-1}}^2+\binom{2^{j-1}}{2^{j-2}}^2+\sum_{\substack{i=1 \\ i \neq 2^{j-2}}}^{2^{j-1}-1}\binom{2^{j-1}}{i}^2,\]
    
   which by induction and Lemma \ref{binomcoeffmod4} is $\equiv 6 \; \text{mod} \; 8$.
\end{proof}

Lemma \ref{alloddbinomcoeff} follows from Lucas's Theorem, which can be found in many books, including [10]:
\begin{lemma}\label{alloddbinomcoeff}
    $\displaystyle{\binom{2^j-1}{t}}$ is odd for every $0 \leq t \leq 2^j-1$.
\end{lemma}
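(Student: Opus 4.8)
The plan is to apply Lucas's Theorem directly, exactly as the sentence preceding the statement advertises. Recall that Lucas's Theorem, for the prime $2$, states that if $a$ and $b$ are non-negative integers with binary expansions $a = \sum_{i \geq 0} a_i 2^i$ and $b = \sum_{i \geq 0} b_i 2^i$ (each $a_i, b_i \in \{0,1\}$), then
\[
\binom{a}{b} \equiv \prod_{i \geq 0} \binom{a_i}{b_i} \pmod{2}.
\]
The key observation is that the binary expansion of $a = 2^j - 1$ is simply $j$ consecutive $1$'s in the $2^0, 2^1, \dots, 2^{j-1}$ places, i.e. $a_i = 1$ for $0 \leq i \leq j-1$ and $a_i = 0$ for $i \geq j$.

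\textbf{Main step.} Fix $t$ with $0 \leq t \leq 2^j - 1$ and write $t = \sum_{i=0}^{j-1} t_i 2^i$ with each $t_i \in \{0,1\}$; note that no binary digit of index $\geq j$ appears, since $t < 2^j$. Applying Lucas's Theorem, $\binom{2^j-1}{t} \equiv \prod_{i=0}^{j-1} \binom{1}{t_i} \pmod 2$. Since $\binom{1}{0} = \binom{1}{1} = 1$, every factor in this product is $1$, so $\binom{2^j-1}{t} \equiv 1 \pmod 2$, which is precisely the claim.

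\textbf{On obstacles.} There is essentially no obstacle here: the result is an immediate specialization of Lucas's Theorem to a number whose binary representation consists entirely of $1$'s, and every digit comparison $\binom{a_i}{b_i}$ is forced to equal $1$. If one preferred to avoid invoking Lucas, an equally short alternative would be induction on $j$ using Pascal's rule: from $\binom{2^j-1}{t} = \binom{2^{j-1}-1 + 2^{j-1}}{t}$ one could apply the Chu--Vandermonde identity (already used elsewhere in this section) to reduce to products of $\binom{2^{j-1}-1}{\cdot}$ terms, but this is messier than the one-line Lucas argument, so I would present the Lucas proof as above.
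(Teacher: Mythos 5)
Your proof is correct and follows exactly the route the paper intends: the paper simply remarks that the lemma ``follows from Lucas's Theorem,'' and your write-up supplies the (straightforward) details, namely that the binary expansion of $2^j-1$ consists entirely of $1$'s, so every digit factor $\binom{1}{t_i}$ equals $1$ and the product is odd. Nothing further is needed.
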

To be more specific about $\displaystyle{\binom{2^j-1}{2^{j-1}}}$, we have the following lemma:

\begin{lemma}\label{binomcoeff_3mod4}
For $j\geq 2$,
    \[\binom{2^j-1}{2^{j-1}}\equiv 3 \; \text{mod} \; 4.\]
\end{lemma}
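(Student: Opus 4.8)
The plan is to prove that $\binom{2^j-1}{2^{j-1}} \equiv 3 \pmod 4$ by relating it to the binomial coefficients whose residues mod $4$ (and mod $8$) we have already pinned down, namely $\binom{2^j}{2^{j-1}} \equiv 6 \pmod 8$ from Lemma \ref{middlebinomcoeffpt2}. The cleanest route is via the elementary identity
\[
\binom{2^j}{2^{j-1}} = \binom{2^j-1}{2^{j-1}-1} + \binom{2^j-1}{2^{j-1}} = \frac{2^j}{2^{j-1}}\binom{2^j-1}{2^{j-1}-1} = 2\binom{2^j-1}{2^{j-1}-1},
\]
using that $\binom{2^j-1}{2^{j-1}-1} = \binom{2^j-1}{2^{j-1}}$ by the symmetry $\binom{n}{t}=\binom{n}{n-t}$ (here $n = 2^j-1$ and $n - 2^{j-1} = 2^{j-1}-1$). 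Hence $\binom{2^j}{2^{j-1}} = 2\binom{2^j-1}{2^{j-1}}$, so $2\binom{2^j-1}{2^{j-1}} \equiv 6 \pmod 8$, which forces $\binom{2^j-1}{2^{j-1}} \equiv 3 \pmod 4$.

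Concretely, the steps I would carry out are: (i) record that $\binom{2^j-1}{2^{j-1}} = \binom{2^j-1}{2^{j-1}-1}$ by symmetry of binomial coefficients; (ii) use Pascal's rule $\binom{2^j}{2^{j-1}} = \binom{2^j-1}{2^{j-1}} + \binom{2^j-1}{2^{j-1}-1}$ to conclude $\binom{2^j}{2^{j-1}} = 2\binom{2^j-1}{2^{j-1}}$; (iii) invoke Lemma \ref{middlebinomcoeffpt2} to get $2\binom{2^j-1}{2^{j-1}} \equiv 6 \pmod 8$; (iv) divide by $2$ carefully — since $6 = 2\cdot 3$ and $8 = 2 \cdot 4$, the congruence $2x \equiv 6 \pmod 8$ is equivalent to $x \equiv 3 \pmod 4$ — to finish. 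Note $j \geq 2$ is exactly the hypothesis needed so that Lemma \ref{middlebinomcoeffpt2} applies.

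There is essentially no obstacle here: the only thing to be slightly careful about is the final division-by-$2$ step, since cancellation of a common factor in a congruence changes the modulus ($2x \equiv 6 \pmod 8 \iff x \equiv 3 \pmod 4$, not mod $8$), but the conclusion we want is precisely the mod-$4$ statement, so this works out. Alternatively, one could give a self-contained induction on $j$ using Chu--Vandermonde in the same style as Lemmas \ref{binomcoeffmod4} and \ref{middlebinomcoeffpt2} together with Lemma \ref{alloddbinomcoeff} (to handle the odd squares $\binom{2^{j-1}-1}{i}^2 \equiv 1 \pmod 8$), but the identity-based argument above is shorter and avoids reproving anything. I would present the identity-based proof as the main argument.
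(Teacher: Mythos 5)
Your proof is correct, but it takes a genuinely different route from the paper. The paper proves this lemma by a self-contained induction on $j$: it expands $\binom{2^j-1}{2^{j-1}}$ via the Chu--Vandermonde identity, isolates the terms $i=0,\,2^{j-2},\,2^{j-1}$, and reduces modulo $4$ using the inductive hypothesis together with Lemma \ref{middlebinomcoeff} and Lemma \ref{binomcoeffmod4}, arriving at $1+2\cdot 3\equiv 3 \;\text{mod}\; 4$. You instead bypass induction entirely: from Pascal's rule and the symmetry $\binom{2^j-1}{2^{j-1}-1}=\binom{2^j-1}{2^{j-1}}$ you get the identity $\binom{2^j}{2^{j-1}}=2\binom{2^j-1}{2^{j-1}}$, then import the mod-$8$ statement of Lemma \ref{middlebinomcoeffpt2} and cancel the factor of $2$, correctly noting that $2x\equiv 6\;\text{mod}\;8$ is equivalent to $x\equiv 3\;\text{mod}\;4$. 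This is legitimate and non-circular, since Lemma \ref{middlebinomcoeffpt2} is established before this lemma and independently of it. What your argument buys is brevity and no repetition of the Chu--Vandermonde induction machinery; what the paper's argument buys is independence from the sharper mod-$8$ result (it uses only mod-$4$ inputs) and stylistic uniformity with the surrounding lemmas. Either proof is acceptable.
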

\begin{proof}
    We prove this via induction.
    
    \textbf{Basis Step} $j=2$: \[\binom{3}{2}=3.\]
    
    \textbf{Inductive Step:} Assume that $\displaystyle{\binom{2^{j-1}-1}{2^{j-2}}}\equiv 3 \; \text{mod} \; 4$. By the Chu-Vandermonde Identity, 
    \[\binom{2^j-1}{2^{j-1}}=\sum_{i=0}^{2^{j-1}}\binom{2^{j-1}}{i}\binom{2^{j-1}-1}{2^{j-1}-i}\] or
    \[\
    \binom{2^{j-1}}{0}\binom{2^{j-1}-1}{2^{j-1}}+\binom{2^{j-1}}{2^{j-1}}\binom{2^{j-1}-1}{0}+\binom{2^{j-1}}{2^{j-2}}\binom{2^{j-1}-1}{2^{j-2}}\]\[+\sum_{\substack{i=1\\i \neq 2^{j-2}}}^{2^{j-1}-1}\binom{2^{j-1}}{i}\binom{2^{j-1}-1}{2^{j-1}-i}.\]
    This is equivalent to
    $ 1+(2*3) \; \text{mod} \; 4
    \equiv 3 \; \text{mod} \; 4.$
\end{proof}
We now direct our attention back to the $a_{r,s}$ coefficients and examine what happens at certain values of $r$. We first note that for $n=2^k$, $a_{r,s+2^{k-1}}=a_{r,s-2^{k-1}}$.
\begin{lemma}\label{2coeffsumlemma1}
    Let $l \geq 1$ and $n=2^k$. Then for every $1\leq s\leq 2^k$,
    \[a_{l2^{k-1},s}+a_{l2^{k-1},s-2^{k-1}} \equiv 0 \; \text{mod} \; 2^l.\]
\end{lemma}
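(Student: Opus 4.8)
The plan is to induct on $l$. Throughout I fix $n=2^k$ and regard each $a_{r,s}$ as an ordinary integer, determined by the recurrence $a_{r,s}=a_{r-1,s}+a_{r-1,s-1}$ of Theorem~\ref{BigCoeffThm}(1) (with the second index read mod $n$) together with the initial values of $a_{0,s}$; parts (1)--(3) of Theorem~\ref{BigCoeffThm} are identities valid in any commutative ring, so they persist for these integers, and reducing mod $2^l$ recovers the coefficients relevant to $\mathbb{Z}_{2^l}^{2^k}$. I will also use freely the remark preceding the lemma that $a_{r,s+2^{k-1}}=a_{r,s-2^{k-1}}$, which is just the congruence $s+2^{k-1}\equiv s-2^{k-1}\pmod{2^k}$.

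For the base case $l=1$ I must show $a_{2^{k-1},s}+a_{2^{k-1},s-2^{k-1}}\equiv 0\pmod 2$ for all $s$. Since $2^{k-1}<n$, Theorem~\ref{BigCoeffThm}(2) gives $a_{2^{k-1},s}=\binom{2^{k-1}}{s-1}$, with the usual convention that this vanishes unless $0\le s-1\le 2^{k-1}$. I would then split on $s$: for $s=1$ the sum equals $\binom{2^{k-1}}{0}+\binom{2^{k-1}}{2^{k-1}}=2$; for $2\le s\le 2^{k-1}$ the second summand reduces to $\binom{2^{k-1}}{s+2^{k-1}-1}=0$ while $\binom{2^{k-1}}{s-1}$ is even because $s-1\notin\{0,2^{k-1}\}$; and the remaining range $2^{k-1}<s\le 2^k$ collapses to the previous cases by writing $a_{2^{k-1},s}+a_{2^{k-1},s-2^{k-1}}=a_{2^{k-1},s-2^{k-1}}+a_{2^{k-1},(s-2^{k-1})-2^{k-1}}$ and applying the symmetry above. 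The case $k=1$ is a one-line direct verification.

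For the inductive step, assume $a_{l2^{k-1},s}+a_{l2^{k-1},s-2^{k-1}}\equiv 0\pmod{2^l}$ for every $s$. I would apply the composition formula Theorem~\ref{BigCoeffThm}(3) with $t=2^{k-1}$ and $r=l2^{k-1}$ to each of $a_{(l+1)2^{k-1},s}$ and $a_{(l+1)2^{k-1},s-2^{k-1}}$ and add them; using $a_{2^{k-1},i}=\binom{2^{k-1}}{i-1}$ this gives
\[a_{(l+1)2^{k-1},s}+a_{(l+1)2^{k-1},s-2^{k-1}}=\sum_{i=1}^{2^k}\binom{2^{k-1}}{i-1}\left(a_{l2^{k-1},\,s-i+1}+a_{l2^{k-1},\,s-2^{k-1}-i+1}\right).\]
The coefficient $\binom{2^{k-1}}{i-1}$ is odd only for $i=1$ and $i=2^{k-1}+1$, and for these two indices the parenthesized bracket is, after reducing $s-2^k\equiv s\pmod{2^k}$, the same quantity $a_{l2^{k-1},s}+a_{l2^{k-1},s-2^{k-1}}$; hence their combined contribution is $2\bigl(a_{l2^{k-1},s}+a_{l2^{k-1},s-2^{k-1}}\bigr)$, which is divisible by $2^{l+1}$ by the inductive hypothesis. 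For every remaining $i$ the coefficient $\binom{2^{k-1}}{i-1}$ is $0$ or even, while the bracket is divisible by $2^l$ by the inductive hypothesis, so each such term is divisible by $2^{l+1}$ as well. Summing, the left side is divisible by $2^{l+1}$, which closes the induction.

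I expect the only genuine obstacle to be the cyclic index bookkeeping: one must check carefully that the two odd-coefficient terms ($i=1$ and $i=2^{k-1}+1$) really collapse to a single expression and that every shift $s\pm 2^{k-1}$ lands where claimed after reduction mod $2^k$, and one must separately dispatch the degenerate case $k=1$, where the range $2\le i\le 2^{k-1}$ is empty. Everything else is forced by Theorem~\ref{BigCoeffThm} and the elementary fact, recalled above, that $\binom{2^j}{t}$ is even whenever $0<t<2^j$.
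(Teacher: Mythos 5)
Your proof is correct and follows essentially the same route as the paper: induction on $l$, the convolution identity of Theorem \ref{BigCoeffThm}(3) with one factor equal to $a_{2^{k-1},i}=\binom{2^{k-1}}{i-1}$, and the fact that these binomial coefficients are even except at $i=1,2^{k-1}+1$, where the two odd-coefficient terms combine to contribute a factor of $2$. The only difference is bookkeeping: you apply the inductive hypothesis directly to the bracket $a_{l2^{k-1},\,s-i+1}+a_{l2^{k-1},\,s-i+1-2^{k-1}}$ for each $i$, whereas the paper splits the sum at $i=2^{k-1}$, re-indexes, and applies the hypothesis to the factor $f(l-1,i)+f(l-1,i-2^{k-1})$ --- the same computation organized slightly differently.
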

\begin{proof}
     We prove this by induction on $l$. Because of how long the subscripts on the $a_{r,s}$ coefficients will end up being in the next few proofs, we will be defining a few functions to represent certain coefficients. Let $f(\gamma, \delta)=a_{\gamma 2^{k-1},\delta}$. In terms of $f$, we want to show
     \[f(l,s)+f(l,s-2^{k-1}) \equiv 0 \; \text{mod} \; 2^l.\]
     
    \textbf{Basis Step} $\mathbf{l=1}$: It suffices to show $f(1,s)+f(1,s-2^{k-1}) \equiv 0 \; \text{mod} \; 2$. Then for $s \neq 1, 2^{k-1}+1$,
    \[f(1,s)+f(1,s-2^{k-1})=\binom{2^{k-1}}{s-1}+\binom{2^{k-1}}{s-2^{k-1}-1}\equiv 0 \; \text{mod} \; 2\]
    because both of these binomial coefficients are even. For $s=1, 2^{k-1}+1$,
    \[f(1,1)+f(1,1+2^{k-1})=\binom{2^{k-1}}{0}+\binom{2^{k-1}}{2^{k-1}}\equiv 0 \; \text{mod} \; 2\]
     and the basis case follows.
    
    \textbf{Inductive Step:} Assume $f(l-1,s)+f(l-1,s-2^{k-1}) \equiv 0 \; \text{mod} \; 2^{l-1}$, then \[f(l,s)+f(l,s-2^{k-1})\]
     is
   \[\sum_{i=1}^{2^k} [f(l-1,i)f(1,s-i+1)+f(l-1,i)f(1,s-2^{k-1}-i+1)].\]
   We now break this up into the following two sums:
    \[\sum_{i=1}^{2^{k-1}} [f(l-1,i)f(1,s-i+1)+f(l-1,i)f(1,s-2^{k-1}-i+1)]\]\[+\sum_{i=2^{k-1}+1}^{2^k} [f(l-1,i)f(1,s-i+1)+f(l-1,i)f(1,s-2^{k-1}-i+1)].\]
  Changing the indices of the second sum to match the first, this is  
    \[\sum_{i=1}^{2^{k-1}} [f(l-1,i)f(1,s-i+1)+f(l-1,i)f(1,s-2^{k-1}-i+1)]\]\[+\sum_{i=1}^{2^{k-1}} [f(l-1,i-2^{k-1})f(1,s-i+2^{k-1}+1)+f(l-1,i-2^{k-1})f(1,s-i+1)],\]
    which can be factored into
    \begin{equation}\label{Equation_lemma_1st_ars_sum}
    \sum_{i=1}^{2^{k-1}}[f(1,s-i+1)+f(1,s-i+2^{k-1}+1)][f(l-1,i)+f(l-1,i-2^{k-1})].
    \end{equation}

    By induction, since $f(l-1,i)+f(l-1,i-2^{k-1}) \equiv 0 \; \text{mod} \; 2^{l-1}$, the addend in the sum from  (\ref{Equation_lemma_1st_ars_sum}) is congruent to $0 \; \text{mod} \; 2^l$ if both $f(1,s-i+1)$ and $f(1,s-i+2^{k-1}+1)$ are even. The only times this does not happen is when $i=s$ and $i=s+2^{k-1}$. In both cases, the addend in the sum of (\ref{Equation_lemma_1st_ars_sum}) is
    \[[f(1,1)+f(1,2^{k-1}+1)][f(l-1,1)+f(l-1,2^{k-1}+1)].\]
    This is
    \[2[f(l-1,1)+f(l-1,2^{k-1}+1)],\]
    which is equivalent to $0 \; \text{mod} \; 2^l$
    by induction. Therefore, (\ref{Equation_lemma_1st_ars_sum}) is $\equiv 0 \; \text{mod} \; 2^l$ and the lemma follows.
\end{proof}
We have one last lemma to prove before Theorem \ref{MainThm}. We are once more interested in a sum of two specific $a_{r,s}$ coefficients.
\begin{lemma}\label{2coeffsumlemma2}
   Let $n=2^k$. For $l \geq 3$,
       \[a_{(l-1)2^{k-1},l2^{k-2}+1}+a_{(l-1)2^{k-1},l2^{k-2}-2^{k-1}+1} \equiv 0 \; \text{mod} \; 2^l.\]
   \end{lemma}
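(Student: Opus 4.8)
\emph{Plan.} I would first collapse the two-term sum to a single coefficient via the symmetry of Lemma~\ref{a_rs_when_=}, and then evaluate $D^{(l-1)2^{k-1}}$ as an endomorphism modulo $2^{l-1}$ using $D=I+H$.

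Since $l2^{k-2}-2^{k-1}+1=(l-2)2^{k-2}+1$, and since Lemma~\ref{a_rs_when_=} with $r=(l-1)2^{k-1}$ identifies $a_{(l-1)2^{k-1},\,l2^{k-2}+1}$ with $a_{(l-1)2^{k-1},\,r-(l2^{k-2}+1)+2}=a_{(l-1)2^{k-1},\,(l-2)2^{k-2}+1}$, the two summands are equal and the lemma reduces to
\[a_{(l-1)2^{k-1},\,l2^{k-2}+1}\equiv 0\pmod{2^{l-1}}.\]
From the definition of the $a_{r,s}$, this quantity is the coefficient of $H^{l2^{k-2}}$ when $D^{(l-1)2^{k-1}}$ is written as a polynomial in $H$ reduced modulo $H^{2^k}=I$.

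To compute that, write $D^{2^{k-1}}=(I+H)^{2^{k-1}}=\sum_j\binom{2^{k-1}}{j}H^j$; using that $\binom{2^{k-1}}{j}$ is even for $0<j<2^{k-1}$, is $\equiv2\pmod4$ for $j=2^{k-2}$ (Lemma~\ref{middlebinomcoeff}) and $\equiv0\pmod4$ otherwise (Lemma~\ref{binomcoeffmod4}), we get $D^{2^{k-1}}=(I+H^{2^{k-1}})+2c\,H^{2^{k-2}}+4P$ with $c$ odd and $P$ a polynomial in $H$ with integer coefficients. Because $(H^{2^{k-1}})^2=I$ we have $(I+H^{2^{k-1}})^a=2^{a-1}(I+H^{2^{k-1}})$ for $a\ge1$, so in the multinomial expansion of $\big((I+H^{2^{k-1}})+2cH^{2^{k-2}}+4P\big)^{l-1}$ a term that uses $I+H^{2^{k-1}}$ exactly $a$ times, $2cH^{2^{k-2}}$ exactly $b$ times and $4P$ exactly $d$ times has $2$-adic valuation $\ge l-2+d$ if $a\ge1$ and $\ge l-1$ if $a=0$. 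Hence, modulo $2^{l-1}$, only the $d=0$, $a\ge1$ terms survive, giving
\[D^{(l-1)2^{k-1}}\equiv 2^{l-2}\Big[(I+H^{2^{k-2}})^{l-1}-H^{(l-1)2^{k-2}}\Big](I+H^{2^{k-1}})\pmod{2^{l-1}},\]
the bracket read mod $2$ (the pure power $H^{(l-1)2^{k-2}}$, coming from $b=l-1$, must be subtracted). Setting $G=H^{2^{k-2}}$ so that $G^4=I$ and working in $\mathbb{F}_2[G]/(G^4-1)$, with $u=I+G$ one has $u^4=0$, $I+G^2=u^2$, $(I+G)^{l-1}(I+G^2)=u^{l+1}=0$ (this is where $l\ge3$ is used), and $G^{l-1}(I+G^2)=(1+u)^{l-1}u^2=u^2+(l-1)u^3=(1+G^2)+(l-1)(1+G+G^2+G^3)$. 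The coefficient of $G^{\,l\bmod4}$ here is $[\,l\text{ even}\,]+(l-1)\equiv0\pmod2$, and since $a_{(l-1)2^{k-1},\,l2^{k-2}+1}$ equals $2^{l-2}$ times this coefficient modulo $2^{l-1}$, we conclude $a_{(l-1)2^{k-1},\,l2^{k-2}+1}\equiv0\pmod{2^{l-1}}$, which with the reduction above proves the lemma.

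\emph{Main obstacle.} The subtle step is the valuation accounting in the multinomial expansion — in particular ensuring the $b=l-1$ term is excluded, since that is precisely what turns $(I+H^{2^{k-2}})^{l-1}$ into $(I+H^{2^{k-2}})^{l-1}-H^{(l-1)2^{k-2}}$ and produces the cancellation in the final coefficient. A more computational route, closer to the style of the earlier lemmas, is to induct on $l$ via the convolution $a_{(l-1)2^{k-1},s}=\sum_i\binom{2^{k-1}}{i-1}a_{(l-2)2^{k-1},s-i+1}$ (Theorem~\ref{BigCoeffThm}(3)) while carrying along the whole family of sums $a_{j2^{k-1},(p+1)2^{k-2}+1}+a_{j2^{k-1},(p-1)2^{k-2}+1}$; making that induction close comes down to controlling the parity of the leading $2$-adic digit of each such sum, which is the same content repackaged.
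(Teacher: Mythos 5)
Your argument is correct, and it takes a genuinely different route from the paper's. The paper stays inside its convolution framework: it expands the sum via Theorem \ref{BigCoeffThm}(3), splits and re-indexes so that the expression factors as $\sum_{i=1}^{2^{k-1}}[f(1,i)+f(1,i-2^{k-1})][g(l-2,l,2-i)+g(l-2,l,2^{k-1}+2-i)]$, and finishes by applying Lemma \ref{2coeffsumlemma1} both to the generic indices (where $f(1,i)+f(1,i-2^{k-1})\equiv 0 \bmod 4$) and to the four exceptional ones. You instead use the symmetry $a_{r,s}=a_{r,r-s+2}$ of Lemma \ref{a_rs_when_=} to see that the two summands coincide (indeed $r-s+2=(l-2)2^{k-2}+1$ for $r=(l-1)2^{k-1}$, $s=l2^{k-2}+1$), reducing the lemma to $a_{(l-1)2^{k-1},\,l2^{k-2}+1}\equiv 0 \bmod 2^{l-1}$, and then you analyze $(I+H)^{(l-1)2^{k-1}}$ modulo $2^{l-1}$ directly, via the multinomial expansion of $\bigl((I+H^{2^{k-1}})+2cH^{2^{k-2}}+4P\bigr)^{l-1}$, the identity $(I+H^{2^{k-1}})^a=2^{a-1}(I+H^{2^{k-1}})$, and a short computation in $\mathbb{F}_2[G]/(G^4-1)$ with $G=H^{2^{k-2}}$. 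Your valuation bookkeeping is right (including the exclusion of the $d\ge 1$ and $a=0$ terms and the subtraction of the $b=l-1$ term), the hypothesis $l\ge 3$ enters exactly where you say, at $u^{l+1}=0$, and the final parity count $[\,l\text{ even}\,]+(l-1)\equiv 0 \bmod 2$ is correct; the displayed congruence also checks against small cases such as $k=2$, $l=3,4,5$. What your route buys is independence from Lemma \ref{2coeffsumlemma1} and a strictly stronger output (an explicit form of $D^{(l-1)2^{k-1}}$ modulo $2^{l-1}$ and the single-coefficient divisibility); what the paper's route buys is uniformity with the proof of Lemma \ref{2coeffsumlemma1}, never leaving the $a_{r,s}$ formalism. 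Two small points to make explicit if you write this up: the identification of $a_{r,s}$ with the coefficient of $H^{s-1}$ in $(I+H)^r$ reduced modulo $H^{2^k}=I$ is not stated in the paper and deserves a one-line justification (it follows from Theorem \ref{BigCoeffThm}(1) and the initial values, or directly from $D=I+H$ applied to $(0,\ldots,0,1)$); and your citations of Lemmas \ref{middlebinomcoeff} and \ref{binomcoeffmod4} for $\binom{2^{k-1}}{j}$ require $k\ge 3$, so the case $k=2$ (where $D^2=I+2H+H^2$ gives your decomposition with $c=1$, $P=0$), as well as the standing assumption $k\ge 2$, should be noted separately.
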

   \begin{proof}
   Let $g(\gamma,\epsilon ,\delta)=a_{\gamma 2^{k-1},\epsilon 2^{k-2}+\delta}$. Written in terms of $g$, we want to show
   \[g(l-1,l,1)+g(l-1,l,2^{k-1}+1) \equiv 0 \; \text{mod} \; 2^l.\]
       Calculating 
   $g(l-1,l,1)+g(l-1,l,2^{k-1}+1)$, we have
   \[\sum_{i=1}^{2^k} [f(1,i)g(l-2,l,2-i)+f(1,i)g(l-2,l,2^{k-1}+2-i)] .\]
   Like in the proof of Lemma \ref{2coeffsumlemma1}, we break the sum up and adjust the indices to give
    \[\sum_{i=1}^{2^{k-1}} [f(1,i)g(l-2,l,2-i)+f(1,i)g(l-2,l,2^{k-1}+2-i)]\]
        \[ +\sum_{i=1}^{2^{k-1}} [f(1,i-2^{k-1})g(l-2,l,2^{k-1}+2-i)+f(1,i-2^{k-1})g(l-2,l,2-i)] .\]
   Factoring yields
\begin{equation}\label{Equation_2nd_ars_sum}        
        \sum_{i=1}^{2^{k-1}}[f(1,i)+f(1,i-2^{k-1})][g(l-2,l,2-i)+g(l-2,l,2^{k-1}+2-i)].
        \end{equation}
      Note that by Lemma \ref{2coeffsumlemma1}, $g(l-2,l,2-i)+g(l-2,l,2^{k-1}+2-i) \equiv 0 \; \text{mod} \; 2^{l-2}$. Consequently, if  $f(1,i)+f(1,i-2^{k-1}) \equiv 0 \; \text{mod} \; 4$, then the whole addend in the sum of (\ref{Equation_2nd_ars_sum}) is congruent to $0 \; \text{mod} \; 2^l$.
      
        \indent Note that if $i \neq 1, 2^{k-2}+1, 2^{k-1}+1, 2^{k-1}+2^{k-2}+1$, then both $f(1,i)$ and $f(1,i-2^{k-1})$ are equivalent to $0 \; \text{mod} \; 4$. So reducing modulo $2^l$, Expression (\ref{Equation_2nd_ars_sum}) is
        \[[f(1,1)+f(1,2^{k-1}+1)][g(l-2,l,1)g(l-2,l,2^{k-1}+1)]\]
        \[+[f(1,2^{k-1}+1)+f(1,1)][g(l-2,l,2^{k-1}+1)+g(l-2,l,1)]\]
        \[+[g(1,1,1)+g(1,1,2^{k-1}+1)][g(l-2,l-1,1)+g(l-2,l-1,2^{k-1}+1)]\]
        \[+[g(1,1,2^{k-1}+1)+g(1,1,1)][g(l-2,l-1,2^{k-1}+1)+g(l-2,l-1,1)],\]
        which is congruent to $0 \; \text{mod} \; 2^l$ by Lemma \ref{2coeffsumlemma1} when $l-2 \geq 1$. 
        
   \end{proof}
   We now have all the tools we will use to prove our main theorem.
   \begin{proof}[Proof of Theorem \ref{MainThm}]
We once more note that in (I) on page 103 of \cite{Wong} and in Theorem 2.3 of \cite{Dular}, it is proved that all tuples vanish in $\mathbb{Z}_{2^l}^{2^k}$ for $l,k \in \mathbb{Z}^+$. We also provide a proof of this because to prove that $L_{2^l}(2^k)=(l+1)2^{k-1}$, we will show $D^{(l+1)2^{k-1}}(0,0,...,0,1)=(0,0,...,0)$, which will give us that $P_{2^l}(2^k)=1$ and all tuples in $\mathbb{Z}_{2^l}^{2^k}$ vanish.
 
     \indent We prove this via induction on $l$ with two basis cases. However, we will often need to use theorems that rely on $k \geq 2$ in our proof, so we first prove our theorem for $k=1$ or that $L_{2^l}(2)=l+1$ and that $P_{2^l}(2)=1$ for every $l>0$.
     
     \textbf{Basis Step} $\mathbf{k=1}$: Notice that $D^{\alpha}(0,1)=(2^{\alpha-1},2^{\alpha-1})$. This means that $D^{l+1}(0,1)=(0,0)$ and that $P_{2^l}(2)=1$. Since we also have that 
     \[D^l(0,1)=(2^{l-1},2^{l-1})\not \equiv (0,0) \; \text{mod} \; 2^l,\] $L_{2^l}(2)=l+1$ follows.
       
       \indent For the rest of the proof, we assume $n=2^k$, $k \geq 1$.
       
\textbf{Basis Step} $\mathbf{l=1}$: We first  prove $f(2,s) \equiv 0 \; \text{mod} \; 2$ for all $1 \leq s \leq n$. Recall that $f(2,1)=2$ and for $s \neq 1$, $f(2,s)=\displaystyle{\binom{2^k}{s-1}} \equiv 0 \; \text{mod} \; 2$ which gives us $P_2(2^k)=1$. Note also that for $r<2^k$, $a_{r,1}=1 \not \equiv 0 \; \text{mod} \; 2$. Therefore, this is the first time we have that $a_{r,s} \equiv 0 \; \text{mod} \; 2$ for all $s$, so $L_2(2^k)=2^k$.

    \textbf{Basis Step} $\mathbf{l=2}$: It suffices to show that $L_4(2^k)=3*2^{k-1}$ and $P_4(2^k)=1$. We begin by showing that $f(3,s) \equiv 0 \; \text{mod} \; 4$ for every s. Start by noting
    \[f(3,s)=\sum_{i=1}^{2^k}f(1,i)f(2,s-i+1)\]
     and then separate the terms where $i=1,2^{k-1}+1$ to produce
    \begin{equation}\label{Equation_basisl=2_first}
    f(1,2^{k-1}+1)f(2,s-2^{k-1})+f(1,1)f(2,s)+\smashoperator{\sum_{\substack{i=2\\i\neq 2^{k-1}+1}}}f(1,i)f(2,s-i+1) .
    \end{equation}
   We do this because $f(1,i) \equiv 0 \; \text{mod} \; 2$ for $i \neq 1, 2^{k-1}+1$ and $f(2,s-i+1) \equiv 0 \; \text{mod} \; 2$ as shown in our first basis case. So now Expression (\ref{Equation_basisl=2_first}) is equivalent to
    \begin{equation}\label{Equation_basis_Period}
     f(2,s-2^{k-1})+f(2,s) \; \text{mod} \; 4,
    \end{equation}
    which, because of Lemma \ref{2coeffsumlemma1}, is congruent to $0 \; \text{mod} \; 4$.
      Therefore we have that $P_4(2^k)=1$ and $L_4(2^k)\leq 3*2^{k-1}$. So we now need to show that there exists $s$ such that $a_{3*2^{k-1}-1,s} \not \equiv 0 \; \text{mod} \; 4$. Define $h(\gamma, \delta)=a_{\gamma 2^{k-1}-1,\delta}$
      Calculating $h(3,s)$ for general $s$, we have
    \[h(3,s)=\sum_{i=1}^{2^k} f(1,i)h(2,s-i+1) .\]
    We now separate the terms where $i$ is $1, 2^{k-2}+1$ or $2^{k-1}+1$, which gives
    \begin{equation}\label{Equation_basis_length_long}
    f(1,1)h(2,s)+f(1,2^{k-1}+1)h(2,s-2^{k-1})+g(1,1,1)h(2,s-2^{k-2})
    \end{equation}
    \[+\sum_{i \in J}f(1,i)h(2,s-i+1)\]
    where $J=\{2 \leq i \leq 2^k \; | \; i \neq 2^{k-1}, 2^{k-2}+1\}$. We do this so the sum over $J$ is congruent to $0 \; \text{mod} \; 4$ by Lemma \ref{binomcoeffmod4}.
    If we also take $s=1$, Expression (\ref{Equation_basis_length_long}) is congruent to
   \[
      f(1,1)h(2,1)+f(1,2^{k-1}+1)h(2,2^{k-1}+1)+g(1,1,1)h(2,2^k-2^{k-2}+1) \; \text{mod} \; 4,
     \]
     which is equivalent to
    $ 1+h(2,2^{k-1}+1)+2 \; \text{mod} \; 4$
    because $g(1,1,1) \equiv 2 \; \text{mod} \; 4$ and $h(2,2^k-2^{k-2}+1)$ is odd. From Lemma \ref{binomcoeff_3mod4}, this is congruent to $ 2 \; \text{mod} \; 4$ and $h(3,1) \not \equiv 0 \; \text{mod} \; 4$, so $L_4(2^k)=3*2^{k-1}$.
   
    \textbf{Inductive Step:} Assume that $L_{2^{l-1}}(2^k)=l2^{k-1}$ and $P_{2^{l-1}}(2^k)=1$. This implies that $f(l,s) \equiv 0 \; \text{mod} \; 2^{l-1}$ for every $s$. Calculating $f(l+1,s)$, we get
    \[f(l+1,s)=\sum_{i=1}^{2^k}f(l,i)f(1,s-i+1) .\]
    If we separate out the terms where $s-i+1=1, 2^{k-1}+1$, this becomes
    \begin{equation}\label{Equation_inductive_justperiod}
    f(l,s)f(1,1)+f(l,s-2^{k-1})f(1,2^{k-1}+1)+\sum_{i \in J}f(l,i)f(1,s-i+1)
    \end{equation}
    where $J=\{1 \leq i \leq 2^k \; | \; i \neq s, s-2^{k-1}\}$. We do this so the sum over $J$ is equivalent to $0 \; \text{mod} \; 2^{l}$ by induction and because $f(1,s-i+1)$ is even over $J$. Therefore, Expression (\ref{Equation_inductive_justperiod}) is equivalent to
   \[
     f(l,s)+f(l,s-2^{k-1}) \; \text{mod} \; 2^l,
    \]
   which by Lemma \ref{2coeffsumlemma1} is
    $\equiv 0 \; \text{mod} \; 2^l$. This gives us that $P_m(n)=1$ and 
    \[L_m(n) \leq (l+1)2^{k-1}.\] Now showing $L_m(n)> (l+1)2^{k-1}-1$ will prove the rest of the theorem. Note that because $P_m(n)=1$, it suffices to show that there exists $s$ such that 
    \[h(l+1,s) \not \equiv 0 \; \text{mod} \; 2^l.\]
     We start by breaking down $h(l+1,s)$ as follows:
     \begin{equation}\label{Equation_inductive_first}
     h(l+1,s)=\sum_{i=1}^{2^k}f(l,i)h(1,s-i+1) .
     \end{equation}
     Because $a_{r,s}=a_{r,r-s+2}$ by Lemma \ref{a_rs_when_=}, most coefficients have another coefficient that it is equal to. The case where they do not is when $s\equiv r-s+2 \; \text{mod} \; 2^k $. For our case then, $h(l,i)$ is not equal to another coefficient when 
     \[i=l2^{k-1}-i+2\]
      and 
      \[i=l2^{k-1}-i+2^k+2.\] 
      Solving for $i$, this is when
     $i=l2^{k-2}+1$ and $i=l2^{k-2}+2^{k-1}+1.$
     We will separate these out from our sum so, using Equation (\ref{Equation_inductive_first}), we can view $h(l+1,s)$ like
     \[g(l,l,1)h(1,s-l2^{k-2})+g(l,l,2^{k-1}+1)h(1,s-l2^{k-2}-2^{k-1})\]
     \[+\sum_{i \in M}[f(l,i)h(1,s-i+1)+g(l,2l,2-i)h(1,s-l2^{k-1}+i-1)\]
     where $M$ is defined to preserve equality. We now take $s=l2^{k-2}+1$, so  $h(l+1,l2^{k-2}+1)$ is
     \begin{equation}\label{Equation_inductive_long}
    g(l,l,1)h(1,1)+g(l,l,2^{k-1}+1)h(1, 2^{k-1}+1)  
    \end{equation}    
    \[ +\sum_{i \in M}[f(l,i)h(1,l2^{k-2}+2-i)+g(l,l,2-i)h(1,l2^{k-2}-l2^{k-1}+i)].\]
     
     Taking a look at each piece of (\ref{Equation_inductive_long}), $g(l,l,2^{k-1}+1)h(1, 2^{k-1}+1)=0$ because $h(1,2^{k-1}+1)=0$. For the sum in (\ref{Equation_inductive_long}), this is equal to 
     \[\sum_{i \in M}f(l,i)[h(1,l2^{k-2}+2-i)+h(1,l2^{k-2}-l2^{k-1}+i)], \]
     because $f(l,i)=g(l,l,2-i)$. Since $h(1,s)$ is odd for all $s$ by Lemma \ref{alloddbinomcoeff}, 
     \[h(1,l2^{k-2}-i+2)+h(1,l2^{k-2}-l2^{k-1}+i)\]
      is even. We know $f(l,i) \equiv 0 \; \text{mod} \; 2^{l-1}$, so the whole sum is $\equiv 0 \; \text{mod} \; 2^l$. Therefore, reducing Expression (\ref{Equation_inductive_long}) modulo $2^l$, we conclude that
     \[g(l+1,l,1)\equiv g(l,l,1) \; \text{mod} \; 2^l.\]
     So if we can prove that $g(l,l,1) \not \equiv 0 \; \text{mod} \; 2^l$, then the theorem will follow. Since we already know that $g(l,l,1) \equiv 0 \; \text{mod} \; 2^{l-1}$, we need to prove the following claim:
     
      \textbf{Claim:} $g(l,l,1) \equiv 2^{l-1} \; \text{mod} \; 2^l$ for $l \geq 2$.
     
     We prove this claim via induction $l$.
     
     \textit{Basis Step} $\mathit{l=2}$:  $g(2,2,1) \equiv 2 \; \text{mod} \; 4$ by Lemma \ref{middlebinomcoeff} and the basis case follows.
     
     \textit{Inductive Step:} Assume now that $g(l-1,l-1,1) \equiv 2^{l-2} \; \text{mod} \; 2^{l-1}$, then $g(l,l,1)$ is
     \[\sum_{i=1}^{2^k}f(l-1,i)g(1,l,2-i) .\]
     Let $J^*=\{1 \leq i \leq 2^k \; | \; i \neq (l-1)2^{k-2}+1, l2^{k-2}+1, l2^{k-2}-2^{k-1}+1\}$ and separate all terms not in $J^*$ to see this is
\begin{equation}\label{Equation_inductive_claim}     
     g(l-1,l-1,1)g(1,1,1)+g(l-1,l,1)f(1,1)+g(l-1,l,2^{k-1}+1)g(1,2,1)
     \end{equation}
     \[+\sum_{i \in J^*}f(l-1,i)g(1,l,2-i)\]
     Once more, we separate this sum over $J^*$ because it is $\equiv 0 \; \text{mod} \; 2^l$ because 
     \[f(l-1,i) \equiv 0 \; \text{mod} \; 2^{l-2}\]
      and $g(1,l,2-i) \equiv 0 \; \text{mod} \; 4$ by Lemma \ref{binomcoeffmod4}.
 Looking at the remaining pieces from (\ref{Equation_inductive_claim}), $g(l-1,l-1,1)g(1,1,1) \equiv 2^{l-1} \; \text{mod} \; 2^l$ by induction and Lemma \ref{middlebinomcoeff}. Next, 
     \[g(l-1,l,1)f(1,1)+g(l-1,l,2^{k-1}+1)g(1,2,1)=g(l-1,l,1)+g(l-1,l,2^{k-1}+1)\]
     
    which by Lemma \ref{2coeffsumlemma2} is $\equiv 0 \; \text{mod} \; 2^l$. 
     The claim follows.
     
     \indent Therefore, $g(l+1,l,1) \equiv 2^{l-1} \; \text{mod} \; 2^l$ and $L_m(n)>(l+1)2^{k-1}-1$. Hence, $L_m(n)=(l+1)2^{k-1}$. 
   \end{proof}

\end{document}